\documentclass{article} 
\usepackage{iclr2027_conference,times}

\usepackage{amsmath,amsfonts,bm}

\def\eqref#1{equation~\ref{#1}}

\def\1{\bm{1}}

\DeclareMathAlphabet{\mathsfit}{\encodingdefault}{\sfdefault}{m}{sl}
\SetMathAlphabet{\mathsfit}{bold}{\encodingdefault}{\sfdefault}{bx}{n}

\DeclareMathOperator*{\argmin}{arg\,min}

\usepackage{amsmath, amssymb, amsthm, mathtools}
\usepackage{bbm}
\usepackage{enumitem}
\usepackage[colorlinks=true, citecolor=blue, linkcolor=blue, urlcolor=blue]{hyperref}
\usepackage[nameinlink]{cleveref}

\newtheorem{theorem}{Theorem}[section]
\newtheorem{proposition}[theorem]{Proposition}
\newtheorem{lemma}[theorem]{Lemma}

\usepackage{hyperref}
\usepackage{url}

\title{A statistical theory for blind denoising: \\minimax estimation of the noise level \\from a single sample}

\author{
\textbf{Zhuoer Shen}\\
Yale University\\
\texttt{zhuoer.shen@yale.edu}
}

\iclrfinalcopy 

\begin{document}

\maketitle

\begin{abstract}
Motivated by blind denoising in diffusion models, we study estimation of an
unknown Gaussian noise level from a single high-dimensional observation,
assuming the signal law \(P\) is known. We characterize the minimax
mean-squared error under two structural assumptions on \(P\).
For signals with covering complexity \(k\), the minimax rate is
\(\widetilde{\Theta}_\Lambda
(\min\{\Delta_\Lambda^2,d^{-1}+k^2d^{-2}\})\),
and the constrained MLE attains it up to logarithmic factors.
For \(\alpha\)-strongly log-concave signals, the rate is
\(\Theta_\Lambda
(\min\{\Delta_\Lambda^2,(1+\alpha^{-1})^2d^{-1}\})\),
attained up to constants by a \(P\)-centered norm estimator.
These results show that the structure of the signal law determines both
the difficulty of blind noise estimation and the appropriate estimator.
\end{abstract}

\section{Introduction}
\label{sec:introduction}

Diffusion models generate high-dimensional data by reversing a gradual corruption process through a denoiser or score field indexed by diffusion time
\citep{sohldickstein2015deep,ho2020denoising,song2021scorebased}.
They have also proved effective for conditional generation
\citep{dhariwal2021diffusion}. A closely related class of generative models is flow matching, which learns a time-dependent velocity field along a probability path connecting a simple base distribution to the data distribution
\citep{lipman2023flow,albergo2023building,liu2023flow}. In both cases, the model typically uses a network that takes time as an input, or the corresponding noise level along a Gaussian path
\citep{song2019generative,ho2020denoising,lipman2023flow}.

Recent work has asked whether the network needs this explicit time input.
Noise-unconditioned denoising models can remain competitive with their
conditioned counterparts \citep{sun2025noiseconditioning}, while blind
denoising diffusion models omit the noise amplitude during both training and
sampling \citep{kadkhodaie2026blinddenoising}. Relatedly, in flow matching,
Beckmann Transport Models show that, under suitable geometric conditions, an
autonomous velocity field defines an exact transport and induces a one-step
generative map
\citep{lee2026beckmann}. Taken together, these results suggest that explicit noise conditioning is not always necessary. What remains less understood is why
a model can recover the missing noise information from the noisy state itself.
This leads to a basic statistical question: if the noise level is not provided,
what properties of the signal distribution make it recoverable from a single
noisy observation?

Concretely, a blind denoiser must infer which noisy marginal produced its current input. This problem is closely related to classical noise-level estimation: estimating unknown noise parameters from a single
corrupted observation is a classical difficulty in blind image denoising
\citep{liu2013single}, and recent diffusion-based approaches likewise treat
unknown noise parameters as an additional inference problem
\citep{heurteldepeiges2024listening}.
More recently, \citet{kadkhodaie2026blinddenoising} formalized this problem in their analysis
of blind denoising diffusion models (BDDMs). Under a low-intrinsic-dimensionality assumption, quantified by the covering
complexity of the signal support at the relevant noise scale, they showed that
the posterior over the noise level concentrates near its true value, allowing
the blind score to approximate the corresponding noise-conditioned score and
the model to track an implicit noise schedule. Their analysis yields an
estimation upper bound at the scale $d^{-1}+k^2d^{-2}$, where $d$ is the ambient dimension and $k$ is the corresponding
intrinsic dimension. This bound is sufficient to control the additional error introduced by blind
sampling, but does not determine whether the estimation scale is statistically
optimal or whether low intrinsic dimensionality is necessary for accurate
estimation.

In this work, we study this estimation problem separately from the diffusion dynamics. Consider
\[
    Y=X+\lambda^{-1/2}Z,
    \qquad X\sim P,\quad Z\sim\mathcal{N}(0,I_d),
\]
where the signal law \(P\) is known, the unknown noise level is parameterized
by its precision \(\lambda\in\Lambda\), and we observe a single
\(d\)-dimensional vector \(Y\). The estimator may use the full law \(P\); this oracle formulation excludes
distribution-learning, neural-network approximation, score-estimation, and
discretization errors.

The ambient dimension might appear to provide \(d\) effective observations
and hence a parametric rate. The signal coordinates,
however, need not be independent, and variation in \(X\) can imitate a change
in the noise level. Consequently, high dimension alone does not guarantee accurate estimation: it becomes useful only when the structure of \(P\) prevents
signal variation from concealing noise variation. We therefore ask:
\[
    \textit{Which structural properties make blind noise estimation possible,
    and at what optimal rate?}
\]

\subsection{Our Contributions}
\label{sec:intro-contributions}

Our results show that accurate blind noise estimation is possible under more than one structural regime. We characterize the minimax difficulty in these regimes and give estimators attaining the corresponding rates.

\paragraph{Minimax rates.}
Let $d$ denote the ambient dimension, $k$ the intrinsic dimension
in the low-dimensional regime, $\alpha$ the strong log-concavity parameter,
and $\Delta_\Lambda$ the width of the fixed precision interval. Under the
assumptions stated below, and up to constants depending on $\Lambda$, the
minimax mean-squared error has order
\[
\widetilde{\Theta}_\Lambda\!\left(
\min\left\{\Delta_\Lambda^2,\frac{1}{d}+\frac{k^2}{d^2}\right\}
\right)
\]
under low intrinsic dimensionality, and
\[
\Theta_\Lambda\!\left(
\min\left\{\Delta_\Lambda^2,\frac{(1+\alpha^{-1})^2}{d}\right\}
\right)
\]
under strong log-concavity. The formal statements appear in Theorems~\ref{thm:ld-main}
and~\ref{thm:slc}.

Under low intrinsic dimensionality, we prove that the $d^{-1}$ term is a
parametric lower bound for every fixed signal law $P$, while the $k^2/d^2$
term arises from latent signal variation that can conceal a precision
perturbation of order $k/d$. The constrained maximum-likelihood estimator
(MLE) matches these lower bounds up to logarithmic factors, showing that the
scale obtained in the blind-denoising analysis of
\citet{kadkhodaie2026blinddenoising} is essentially
unimprovable in the fixed-$P$ setting.

Under strong log-concavity, low-dimensional support is not necessary.
For $\alpha$-strongly log-concave signal laws, which may have full-dimensional
support, we construct a $P$-centered norm estimator that attains the minimax
rate up to constants. The MLE has the optimal dependence on $\alpha$ with an
additional logarithmic factor in dimension. Thus, accurate noise estimation does not require low-dimensional support.

\paragraph{Norm-based and likelihood-based estimation.}
Strong log-concavity controls the radial fluctuations needed for the norm
estimator through a Poincar\'e inequality. In contrast, low covering
complexity alone does not control $\operatorname{Var}_P(\|X\|^2)$, so the
standard energy-corrected norm statistic has no uniform variance bound in
terms of $k$ alone. This explains why the same simple statistic need not
attain the low-dimensional rate, whereas the MLE adapts to the full signal law. We develop this comparison in Section~\ref{sec:upper-techniques} and
Appendix~\ref{app:ld-proofs}.

\paragraph{Proof ideas.}
We develop separate lower- and upper-bound arguments for the two structural
regimes.
For the $k^2/d^2$ lower bound, we construct a single variance-mixture signal
law and approximate it by a finite point cloud, preserving overlap between
noisy marginals while enforcing the covering condition. For the MLE, we use
two localization arguments. Under low intrinsic dimensionality,
posterior-energy control and monotonicity yield a quadratic likelihood
comparison. Under strong log-concavity, we instead establish R\'enyi
separation and combine it with a one-dimensional covering argument. The separation-based argument isolates the structural input in a bound between
the noisy marginals, suggesting a route to other signal classes for which
comparable separation estimates can be established. Section~\ref{sec:techniques} gives the proof overview, with complete arguments
in Appendices~\ref{app:ld-proofs} and~\ref{app:slc-proofs}.

\subsection{Prior Work}
\label{sec:intro-prior-work}

\paragraph{Blind denoising and noise-unconditioned generative models.}
Classical single-image noise estimators infer the noise level from low-texture or low-rank image patches
\citep{liu2013single,chen2015efficient}.
Learning-based denoisers have treated the role of the noise level in different ways.
DnCNN demonstrated blind Gaussian denoising with an unknown noise level
\citep{zhang2017beyond}, whereas FFDNet takes a noise-level map as an explicit input while using a single network across a range of noise levels
\citep{zhang2018ffdnet}. \citet{gnanasambandam2020one} instead study how to choose the training distribution over noise levels when one denoiser is used across the entire range. Blind denoisers have also been used
as implicit priors for inverse problems \citep{kadkhodaie2021stochastic}.
More recently, \citet{sun2025noiseconditioning} analyzed the error incurred by
removing noise conditioning from denoising generative models, while
\citet{heurteldepeiges2024listening} proposed a Gibbs procedure that jointly
samples the signal and unknown parametric noise variables using a diffusion
prior. \citet{kadkhodaie2026blinddenoising} is the closest work to ours: their BDDM analysis gives the low-dimensional noise-estimation upper bound, whereas we study the minimax limits of this estimation problem.

\paragraph{Diffusion models under low-dimensional structure.}
Empirical evidence suggests that the intrinsic dimension of natural images is
much smaller than their ambient dimension \citep{pope2021intrinsic}. Theoretical
work shows that score-based diffusion models can adapt their sampling
or discretization complexity to such structure
\citep{huang2024denoising,li2024adapting,
potaptchik2025linear,liang2025lowdimensional}. These results concern sampling
with supplied noise levels and accurate scores. In the blind setting,
low-dimensional structure plays a different role: it can make the missing
noise level recoverable from the current noisy state
\citep{kadkhodaie2026blinddenoising}. Our low-dimensional result characterizes
the optimal estimation scale associated with this mechanism.

\paragraph{Theoretical analyses of diffusion models.}
Many analyses condition on an accurate score estimator to separate
score-estimation error from initialization and discretization errors
\citep{chen2023improved,chen2023sampling,lee2023convergence}. Complementary
work studies the statistical problem of learning the score itself from finite
data \citep{wibisono2024optimal}. Our formulation isolates noise-level estimation with the signal law given. Strong log-concavity is also a standard structural assumption in the theory
of high-dimensional sampling and diffusion models, where it yields
concentration and functional inequalities that support quantitative
convergence guarantees
\citep{saumard2014logconcavity}.

\paragraph{Convolution models with unknown noise.}
Unknown Gaussian noise variance has also been studied in semiparametric
convolution models \citep{matias2002semiparametric,butucea2005minimax}. In that
literature, the signal density is unknown, information accumulates through
multiple independent observations, and smoothness assumptions ensure
identifiability. Here, one noisy vector is observed with \(P\) given, and the rates depend
on its covering complexity \(k\) or strong log-concavity parameter \(\alpha\).

\section{Background}
\label{sec:background}

\subsection{From Blind Denoising to Estimation of the Noise Level}
\label{sec:blind-to-estimation}

Additive Gaussian noise is a standard model of corruption in denoising and
diffusion models~\citep{sohldickstein2015deep,song2021scorebased}. For a
clean signal \(X\sim P\) and an independent \(Z\sim\mathcal{N}(0,I_d)\), a
noisy observation at level \(\sigma>0\) has the form
\[
    Y_\sigma=X+\sigma Z,
    \qquad
    P_\sigma:=P*\mathcal{N}(0,\sigma^2I_d).
\]
Standard diffusion models typically use denoisers that take the noise level as
an explicit input, leading to somewhat ad hoc choices of architecture and
diffusion schedule. Recently, \citet{kadkhodaie2021stochastic}
showed that one can instead use a \emph{blind} denoiser, trained across
noise levels \(\sigma\sim\Pi_0\) without receiving \(\sigma\) itself as an input.
Given a network \(f_\theta:\mathbb{R}^d\to\mathbb{R}^d\), the training
objective is
\[
    \mathbb{E}_{X,\sigma,Z}
    \bigl[\|X-f_\theta(X+\sigma Z)\|^2\bigr],
    \qquad
    \sigma\sim\Pi_0,\quad Z\sim\mathcal{N}(0,I_d).
\]
Let \(f_\sigma^\star(y):=\mathbb{E}[X\mid Y_\sigma=y]\) denote the
noise-conditioned Bayes denoiser. The population-optimal blind denoiser is
\[
    f^\star(y)
    =
    \int f_\sigma^\star(y)\,\Pi(d\sigma\mid y),
    \qquad
    \Pi(d\sigma\mid y)
    =
    \frac{p_\sigma(y)\Pi_0(d\sigma)}
         {\int p_\omega(y)\Pi_0(d\omega)}.
\]

Surprisingly, blind denoising can even improve image generation quality
despite the missing noise input~\citep{kadkhodaie2026blinddenoising}.
This work gave a theoretical explanation under
low intrinsic dimension: the posterior
\(\Pi(\cdot\mid Y)\) concentrates near the true noise level
\(\sigma_\star\), so the blind Bayes denoiser is close to the corresponding
noise-conditioned denoiser \(f_{\sigma_\star}^\star(Y)\). The same noisy
marginal determines the latter through the Tweedie--Miyasawa identity
\[
    f_\sigma^\star(y)-y
    =
    \sigma^2\nabla\log p_\sigma(y),
\]
which links denoising to the score of
\(P_\sigma\)~\citep{miyasawa1961empirical,efron2011tweedie}.

In this work, we study blind estimation of the noise level as a statistical
problem in its own right. In particular, we ask whether the statistical rate
obtained under low intrinsic dimension is tight, and under what other
conditions blind noise estimation is possible.

The posterior \(\Pi(\cdot\mid y)\) combines the prior weights in \(\Pi_0\)
with likelihood ratios between the marginals
\(\{P_\sigma\}_{\sigma>0}\). If two noise levels induce nearly
indistinguishable laws for \(Y\), then they cannot be reliably
distinguished from the current state. We therefore study the statistical
problem of estimating the noise level from \(Y\) for a fixed signal law
\(P\), independently of the choice of training distribution \(\Pi_0\).

\subsection{Statistical formulation and likelihood identities}
\label{sec:statistical-formulation}

We parameterize the noise by its precision
\(\lambda:=\sigma^{-2}\), and fix
\(\Lambda=[\lambda_{\min},\lambda_{\max}]\) with
\(0<\lambda_{\min}<\lambda_{\max}<\infty\).
Since \(\Lambda\) is a fixed compact interval, estimating \(\lambda\) or
\(\sigma=\lambda^{-1/2}\) under squared-error loss gives equivalent rates,
up to constants depending only on \(\Lambda\). We work with \(\lambda\) because
the likelihood takes a simpler form:
\[
    Y=X+\lambda^{-1/2}Z,
    \qquad
    P_\lambda:=P*\mathcal{N}(0,\lambda^{-1}I_d),
    \qquad
    p_\lambda(y)
    =
    \left(\frac{\lambda}{2\pi}\right)^{d/2}
    \int
    e^{-\frac{\lambda}{2}\|x-y\|^2}\,P(dx).
\]
Write
\(\Delta_\Lambda:=\lambda_{\max}-\lambda_{\min}\).

For each fixed \(P\), define
\[
    \mathcal{R}(P;\Lambda)
    :=
    \inf_{\widehat{\lambda}_P}
    \sup_{\lambda\in\Lambda}
    \mathbb{E}_{\lambda,P}
    \left[
        \bigl(\widehat{\lambda}_P(Y)-\lambda\bigr)^2
    \right],
\]
where the infimum is over estimators
\(\widehat{\lambda}_P:\mathbb{R}^d\to\Lambda\).
Throughout, \(P\) is treated as known, and
\(\widehat{\lambda}_P\) may depend arbitrarily on \(P\), including through
its full likelihood. In Section~\ref{sec:results}, we take the supremum of
\(\mathcal{R}(P;\Lambda)\) over each structural class.

Let \(\ell(\lambda\mid y):=-\log p_\lambda(y)\), and define the posterior
quadratic error
\[
    D_\lambda(y)
    :=
    \mathbb{E}_{\lambda,P}
    \left[\|X-y\|^2\mid Y=y\right].
\]
Direct differentiation gives
\[
    2\ell'(\lambda\mid y)
    =
    -\frac{d}{\lambda}+D_\lambda(y),
    \qquad
    \partial_\lambda D_\lambda(y)
    =
    -\frac12
    \operatorname{Var}_{\lambda,P}
    \!\left(\|X-y\|^2\mid Y=y\right)
    \le 0.
\]

With \(\lambda=\sigma^{-2}\), \(D_\lambda(Y)\) is the conditional
quadratic error appearing in the blind-denoising analysis.
We consider the constrained MLE
\[
    \widehat{\lambda}_{\mathrm{MLE},P}
    \in
    \arg\min_{\lambda\in\Lambda}\ell(\lambda\mid Y).
\]
Its localization is analyzed in Section~\ref{sec:upper-techniques};
the likelihood derivations and auxiliary inequalities are collected in
Appendix~\ref{app:preliminaries}.
\section{Results}
\label{sec:results}

We now formally state our assumptions and our main results.

\subsection{Results for low intrinsic dimensionality}
\label{sec:ld-results}

We first consider signal distributions with low geometric complexity. Define the resolution
\[
r_0
:=
\frac{\sqrt{\lambda_{\min}}}
{\lambda_{\max}\sqrt d}.
\]
For $k\geq 1$ and $R>0$, let
$\mathcal{P}_{\mathrm{LD}}(k,R)$ be the class of probability measures
$P$ on $\mathbb{R}^d$ satisfying
\[
\operatorname{supp}(P)\subseteq B(0,R),
\qquad
1+\log N(\operatorname{supp}(P),r_0)\leq k,
\]
where $N(S,r_0)$ denotes the Euclidean covering number of $S$ at
scale $r_0$. The support may be discrete or highly nonconvex. Its covering complexity at
this resolution is bounded by $k$, which need not scale with the ambient
dimension $d$.

Using the risk $\mathcal{R}(P;\Lambda)$ defined in
Section~\ref{sec:background}, set
\[
\mathcal{R}^{\star}_{\mathrm{LD}}(d,k,R;\Lambda)
:=
\sup_{P\in\mathcal{P}_{\mathrm{LD}}(k,R)}
\mathcal{R}(P;\Lambda).
\]

Our main result shows that, up to logarithmic factors, the minimax risk is
governed by the two scales \(d^{-1}\) and \(k^2d^{-2}\).

\begin{theorem}[Low intrinsic dimensionality]
\label{thm:ld-main}
Assume $1\leq k\lesssim_{\Lambda} d$ and $R^2\gtrsim_{\Lambda} k$. Then
\[
\mathcal{R}^{\star}_{\mathrm{LD}}(d,k,R;\Lambda)
\gtrsim
\min\left\{
\Delta_{\Lambda}^2,\,
\lambda_{\min}^2
\left(
\frac{1}{d}+\frac{k^2}{d^2}
\right)
\right\}.
\]
Conversely, for every fixed
$P\in\mathcal P_{\mathrm{LD}}(k,R)$, the MLE defined in
Section~\ref{sec:statistical-formulation} satisfies
\[
\sup_{\lambda^\star\in\Lambda}
\mathbb{E}_{\lambda^\star,P}
\bigl[
(\widehat{\lambda}_{\mathrm{MLE},P}-\lambda^\star)^2
\bigr]
\lesssim
\min\left\{
\Delta_{\Lambda}^2,\,
\frac{\lambda_{\max}^4}{\lambda_{\min}^2}
\left[
\frac{\log(2d)}{d}
+
\frac{(k+\log(2d))^2}{d^2}
\right]
\right\}.
\]
Hence, for a fixed compact precision interval \(\Lambda\),
\[
\mathcal{R}_{\mathrm{LD}}^\star(d,k,R;\Lambda)
=
\widetilde{\Theta}_{\Lambda}
\left(
\min\left\{
\Delta_\Lambda^2,\,
\frac{1}{d}+\frac{k^2}{d^2}
\right\}
\right).
\]
\end{theorem}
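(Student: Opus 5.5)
The proof splits into a lower bound with two terms, each obtained by a two-point (Le Cam) argument, and an upper bound for the constrained MLE obtained by localizing the score equation.

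\textbf{Parametric lower bound.} For the $d^{-1}$ term, take $P=\delta_0$, which lies in $\mathcal{P}_{\mathrm{LD}}(1,R)$. Then $Y\sim\mathcal N(0,\lambda^{-1}I_d)$, and the Fisher information for $\lambda$ is $d/(2\lambda^2)$. Two precisions $\lambda_0$ and $\lambda_0+h$ with $h\asymp\min\{\Delta_\Lambda,\lambda_{\min}d^{-1/2}\}$ give Gaussian laws at KL distance $O(1)$. Le Cam then yields risk $\gtrsim h^2$. The same argument applies to any fixed $P$ by data processing: $P_\lambda$ is the image of the pure Gaussian experiment under convolution with $P$.

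\textbf{Signal-confounding lower bound.} For the $k^2/d^2$ term, the idea is to let the signal imitate extra noise in $k$ directions. Take $X=(\tau W,0)$ with $W\in\mathbb R^k$ a scale mixture of Gaussians. Under $\lambda$ the first block is then a mixture of $\mathcal N(0,(\tau^2 s+\lambda^{-1})I_k)$ and the last $d-k$ coordinates are pure noise. I would build two mixing laws $\mu_0,\mu_1$ on $s$ whose first several moments of the induced variance match after shifting by $\lambda_0^{-1}$ versus $\lambda_1^{-1}$. The remaining $d-k$ coordinates alone allow $|\lambda_1-\lambda_0|\asymp d^{-1/2}$, and the $k$-block must absorb the residual discrepancy. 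A cleaner route is to choose the signal radius along the $k$ block, $\|X\|^2\approx\tau^2 k$, as a continuous variable. Changing $\lambda$ by $h$ alters $\mathbb E\|Y\|^2$ by $d h/\lambda^2$, and a shift in the latent radius distribution compensates this, up to chi-square fluctuations of order $\sqrt d$ in $\|Y\|^2$. Because the latent radius must vary over a range $\asymp k$ (this is where $R^2\gtrsim k$ enters), perturbations up to $h\asymp k/d$ are concealed. I would formalize this by matching the laws of the sufficient-like statistics, namely the $k$-block projection and $\|Y_{\perp}\|^2$, and bounding TV by $\chi^2$ between the two variance mixtures. Finally, the continuous mixture is discretized to a point cloud: $e^{k}$ points at scale $r_0$, with perturbation in TV controlled because moving $X$ by $r_0$ changes the $\lambda$-marginal by KL $\lesssim\lambda_{\max} r_0^2\ll 1$. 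This step is the main obstacle. The mixture must simultaneously make the noisy marginals overlap with $O(1)$ TV at separation $h\asymp\lambda_{\min}k/d$, stay inside $B(0,R)$, and have covering number $e^{O(k)}$ at resolution $r_0$. I would first try a radial mixture on a $k$-dimensional ball with a carefully chosen radius density, computing the $\chi^2$ through Gaussian integrals in the radial variable.

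\textbf{Upper bound.} Use the identity $2\ell'(\lambda\mid y)=-d/\lambda+D_\lambda(y)$ together with the monotonicity $\partial_\lambda D_\lambda\le0$. Since $\ell'$ is increasing, the MLE is the projection onto $\Lambda$ of the root of $D_\lambda(Y)=d/\lambda$. Thus $|\widehat\lambda-\lambda^\star|>t$ requires $D_{\lambda^\star\pm t}(Y)$ to fall on the wrong side of $d/(\lambda^\star\pm t)$, which differs from $d/\lambda^\star$ by $\asymp dt/\lambda^{2}$. It therefore suffices to show that, under $\lambda^\star$, $D_{\lambda}(Y)$ for $\lambda$ near $\lambda^\star$ concentrates within $O(\sqrt{d\log d}+k+\log d)$ of $d/\lambda^\star$. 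For this, cover $\operatorname{supp}P$ by $e^{k}$ balls of radius $r_0$ and union bound over centers. For each center $x_j$, the quantity $\|x_j-Y\|^2$ splits into a noise part and a cross term. By Gaussian chi-square concentration, every candidate posterior point with $\|x-Y\|^2$ deviating from $\|X-Y\|^2$ by more than $C(\sqrt{d(k+\log d)}+k+\log d)$ carries exponentially small posterior weight. This posterior-energy control, combined with $\mathbb E\|X-Y\|^2=d/\lambda^\star$ and the resolution $r_0$ (which makes within-ball variation of $\|x-Y\|^2$ $O(1)$), gives $|D_\lambda(Y)-d/\lambda^\star|\lesssim \lambda_{\max}^{2}\lambda_{\min}^{-1}(\sqrt{d\log d}+k+\log d)$ with probability $1-d^{-2}$. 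Dividing by $d/\lambda^2$, squaring, and capping by $\Delta_\Lambda^2$ on the failure event gives the stated rate.
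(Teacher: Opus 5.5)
You have the parametric $d^{-1}$ bound right, and it is the paper's argument. The $k^2/d^2$ construction, however, fails at its foundation. If the signal lives in a $k$-dimensional block, $X=(\tau W,0)$, then $Y_\perp=\lambda^{-1/2}Z_\perp\sim\mathcal N(0,\lambda^{-1}I_{d-k})$ whatever the signal law is. Hence
\[
\|P_{\lambda_0}-P_{\lambda_1}\|_{\mathrm{TV}}\ \ge\ \bigl\|\mathcal N(0,\lambda_0^{-1}I_{d-k})-\mathcal N(0,\lambda_1^{-1}I_{d-k})\bigr\|_{\mathrm{TV}}\ \longrightarrow\ 1
\]
once $(d-k)(\lambda_1-\lambda_0)^2\to\infty$. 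That is, it tends to one for $|\lambda_1-\lambda_0|\asymp k/d$ with $\sqrt d\ll k\le d/2$, which is exactly the regime where the $k^2/d^2$ term matters. No mixture on the block can ``match the law of $\|Y_\perp\|^2$''.

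There are two further problems. First, two different mixing laws $\mu_0,\mu_1$ with matched moments is the semiparametric (unknown-$P$) device. Here $\mathcal R(P;\Lambda)$ requires the same $P$ under both hypotheses. Second, quantizing a $k$-dimensional region of radius $\asymp\sqrt k$ at scale $\lesssim 1$ already needs $\log N\gtrsim k\log k$ points, and at scale $r_0$ it needs $\log N\gtrsim k\log d$. Either way this violates $1+\log N\le k$.

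The missing ideas are two. (i) The signal must be isotropic in \emph{all} $d$ coordinates so that it imitates the noise. A single law conceals a noise shift if it is a uniform mixture over equally spaced variance levels, $P^\star=\frac1M\sum_{m=0}^{M-1}\mathcal N(0,m\delta I_d)$ with $\delta\asymp_\Lambda k/d$. Raising the noise variance by $\delta$ moves every component up one level, so only the two endpoint components fail to cancel and the TV is at most $1/M$. (ii) Low covering complexity comes from \emph{cardinality}, not dimension. Replacing each layer by $J$ i.i.d.\ draws gives $\mathbb E\,\chi^2=J^{-1}[(1+m\delta/a)^d-1]\le J^{-1}e^{M\delta d/a_0}=J^{-1}e^{O(Mck)}$. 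So $J=e^{O(k)}$ points per layer suffice, with $JM\le e^{k-1}$ and squared support radius $\lesssim M\delta(d+k)\lesssim_\Lambda k$; this is where $R^2\gtrsim_\Lambda k$ enters.

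Your upper-bound skeleton (score identity, monotone $D_\lambda$, covering-based posterior-energy control) matches the paper's, but two steps are wrong as written.

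First, $\ell'$ is not increasing. We have $2\ell''(\lambda\mid y)=d/\lambda^2-\tfrac12\operatorname{Var}_{q_\lambda(\cdot\mid y)}(\|X-y\|^2)$, which can be negative; for scale mixtures like the one above, the likelihood is multimodal in $\lambda$. So checking $D_\lambda(Y)$ only at $\lambda^\star\pm t$ does not localize a global minimizer. The repair is the paper's. Control $D_\lambda(Y)$ at the two endpoints $\lambda_{\min},\lambda_{\max}$, and use $\partial_\lambda D_\lambda\le0$ to sandwich $D_\lambda(Y)$ uniformly over $\Lambda$. Then integrate the score against $\Psi_{\lambda^\star}$ and use $\ell(\widehat\lambda_{\mathrm{MLE},P}\mid Y)\le\ell(\lambda^\star\mid Y)$, or equivalently use the sign of $\ell'$ on all of $\Lambda$ outside the window.

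Second, your posterior-energy step only rules out deviations exceeding $\sqrt{d(k+\log d)}$. That yields MSE of order $k/d$, not $d^{-1}+k^2/d^2$. To obtain the bound $\sqrt{d\log(1/\eta)}+k+\log(1/\eta)$, you must show the posterior concentrates on $\{x:\|x-X\|^2\lesssim_\Lambda k+\log(1/\eta)\}$. This uses the fact that the covering cell containing $X$ has prior mass $\ge\eta e^{-k}$ with probability $1-\eta$. Then the cross term $2(\lambda^\star)^{-1/2}\langle X-x,Z\rangle$, bounded over the centers by a union bound as $O(\|x-X\|\sqrt{k+\log(1/\eta)})$, is itself $O(k+\log(1/\eta))$. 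The paper imports exactly this bound from Proposition~F.2 of Kadkhodaie et al.
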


The two terms dominate in different regimes. When $k \lesssim \sqrt{d}$, the parametric term $d^{-1}$ determines the rate, whereas for $k \gtrsim \sqrt{d}$, the structural term $k^2/d^2$ dominates. The radius $R$ enters through feasibility of the structural class and the lower-bound construction; once $R^2 \gtrsim_{\Lambda} k$, it does not determine the resulting minimax scale. This shows that the low-dimensional estimation rate obtained by
\citet{kadkhodaie2026blinddenoising} is tight up to logarithmic factors.

\paragraph{Proof sketch.}
The \(d^{-1}\) lower bound follows from a parametric two-point argument
for fixed \(P\). The \(k^2/d^2\) term uses a variance-mixture construction
with precision separation of order \(k/d\); see
Section~\ref{sec:lower-techniques} and Appendix~\ref{app:ld-proofs}.

For the upper bound, control of $D_\lambda(Y)$ together with the likelihood identity in Section~\ref{sec:background} compares the observed negative log-likelihood with a quadratic Gaussian reference. This yields a localization bound for the MLE; see Section~\ref{sec:upper-techniques} and Appendix~\ref{app:ld-proofs}.

\subsection{Results for Strong Log-Concavity}
\label{sec:slc-results}

Next, we investigate whether we can go beyond low intrinsic dimensionality by identifying another class of structured signal priors for which blind noise estimation is possible. We consider strongly log-concave signal distributions. For $\alpha>0$, let
\[
\mathcal P_{\mathrm{SLC}}(\alpha)
:=
\left\{
P(dx)=Z_V^{-1}e^{-V(x)}dx:
V\in C^2(\mathbb R^d),\ 
\nabla^2V(x)\succeq \alpha I_d
\right\}.
\]
For $P\in\mathcal P_{\mathrm{SLC}}(\alpha)$, let
$\mu_P:=\mathbb E_PX$. Since $P$ is known, so is $\mu_P$. Translating $(X,Y)$ to $(X-\mu_P,Y-\mu_P)$ preserves both the noise
precision and strong log-concavity. Unlike the class in Section~\ref{sec:ld-results}, this condition controls
signal spread through $\alpha$ without restricting support dimension.

Using the risk $\mathcal{R}(P;\Lambda)$ defined in
Section~\ref{sec:background}, set
\[
\mathcal{R}^\star_{\mathrm{SLC}}(d,\alpha;\Lambda)
:=
\sup_{P\in\mathcal P_{\mathrm{SLC}}(\alpha)}
\mathcal{R}(P;\Lambda),
\qquad
\rho_{\alpha,d}(\Lambda)
:=
\min\left\{
\Delta_\Lambda^2,\,
\frac{(1+\alpha^{-1})^2}{d}
\right\}.
\]

Our main result shows that, up to constants depending on \(\Lambda\), the minimax risk is governed by the scale \((1+\alpha^{-1})^2/d\).

\begin{theorem}[Strong log-concavity]
\label{thm:slc}
For every $d\ge 1$ and $\alpha>0$,
\[
\mathcal R^\star_{\mathrm{SLC}}(d,\alpha;\Lambda)
\gtrsim_\Lambda
\rho_{\alpha,d}(\Lambda).
\]

Conversely, for every fixed $P\in\mathcal P_{\mathrm{SLC}}(\alpha)$,
there exists a $P$-centered norm estimator
$\widehat\lambda_{\mathrm{norm},P}$ such that
\[
\sup_{\lambda\in\Lambda}
\mathbb E_{\lambda,P}
\left[
\bigl(
\widehat\lambda_{\mathrm{norm},P}(Y)-\lambda
\bigr)^2
\right]
\le
\min\left\{
\Delta_\Lambda^2,\,
\frac{\lambda_{\max}^4}{d}
\left(
\frac{4}{\alpha^2}
+
\frac{4}{\alpha\lambda_{\min}}
+
\frac{2}{\lambda_{\min}^2}
\right)
\right\}.
\]

The constrained fixed-$P$ MLE defined in Section~\ref{sec:statistical-formulation}
additionally satisfies
\[
\sup_{\lambda^\star\in\Lambda}
\mathbb E_{\lambda^\star,P}
\left[
\bigl(
\widehat\lambda_{\mathrm{MLE},P}(Y)-\lambda^\star
\bigr)^2
\right]
\lesssim_\Lambda
\min\left\{
\Delta_\Lambda^2,\,
\frac{(1+\alpha^{-1})^2\log(2d)}{d}
\right\}.
\]

Hence,
\[
\mathcal R^\star_{\mathrm{SLC}}(d,\alpha;\Lambda)
\asymp_\Lambda
\rho_{\alpha,d}(\Lambda).
\]
\end{theorem}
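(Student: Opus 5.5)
We treat the three parts of Theorem~\ref{thm:slc} separately: the lower bound, the norm-estimator upper bound, and the MLE bound.

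\emph{Lower bound.} We use a two-point (Le Cam) argument with Gaussian signal laws $P=\mathcal N(0,\alpha^{-1}I_d)$, which lie in $\mathcal P_{\mathrm{SLC}}(\alpha)$. Then $P_\lambda=\mathcal N(0,(\alpha^{-1}+\lambda^{-1})I_d)$. Take $\lambda_0=\lambda_{\min}$ and $\lambda_1=\lambda_0+\delta$ with $\delta\le\Delta_\Lambda$. Writing $s_j=\alpha^{-1}+\lambda_j^{-1}$, the two marginals are isotropic Gaussians, so
\[
\mathrm{KL}\le C d\,(s_0-s_1)^2/s_1^2.
\]
Here $s_0-s_1\asymp_\Lambda\delta$ and $s_1\asymp_\Lambda 1+\alpha^{-1}$. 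Choosing
\[
\delta\asymp_\Lambda \min\{\Delta_\Lambda,(1+\alpha^{-1})d^{-1/2}\}
\]
makes the KL divergence $O(1)$, and Le Cam's bound then gives risk $\gtrsim\delta^2\asymp_\Lambda\rho_{\alpha,d}(\Lambda)$. The bound $\min\{\Delta_\Lambda^2,\cdot\}$ on the upper side is trivial, since both the estimator and $\lambda$ take values in $\Lambda$.

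\emph{Norm estimator.} After centering so that $\mu_P=0$, set $T=\|Y\|^2/d$. Its mean is
\[
\mathbb E_\lambda T=m_P+\lambda^{-1},\qquad m_P:=\mathbb E_P\|X\|^2/d,
\]
where $m_P$ is known. Define $\widehat\lambda_{\mathrm{norm},P}$ as the projection onto $\Lambda$ of $(T-m_P)^{-1}$ (projecting $\widehat s:=T-m_P$ onto $[\lambda_{\max}^{-1},\lambda_{\min}^{-1}]$ and inverting). Projection is $1$-Lipschitz, and $s\mapsto 1/s$ is $\lambda_{\max}^2$-Lipschitz on that interval, so
\[
\mathbb E(\widehat\lambda-\lambda)^2\le\lambda_{\max}^4\operatorname{Var}(T).
\]
Expand
\[
\|Y\|^2=\|X\|^2+2\lambda^{-1/2}\langle X,Z\rangle+\lambda^{-1}\|Z\|^2 .
\]
The three terms are pairwise uncorrelated given centering. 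For the first, the Poincaré inequality for an $\alpha$-SLC law (Brascamp--Lieb) gives
\[
\operatorname{Var}\|X\|^2\le\alpha^{-1}\mathbb E\|2X\|^2\le 4\alpha^{-2}d,
\]
using $\mathbb E\|X\|^2\le d/\alpha$. The second term has variance $4\lambda^{-1}\mathbb E\|X\|^2\le4d/(\alpha\lambda_{\min})$. The third has variance $2d\lambda^{-2}\le 2d/\lambda_{\min}^2$. Dividing by $d^2$ yields exactly the stated constants. The one point to check is that the cross terms $\operatorname{Cov}(\|X\|^2,\langle X,Z\rangle)$ and $\operatorname{Cov}(\langle X,Z\rangle,\|Z\|^2)$ vanish; this follows from the symmetry of $Z$.

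\emph{MLE.} This is the main obstacle. The plan follows the separation route announced in the introduction:
\begin{enumerate}
\item Show a Rényi (order $1/2$, Bhattacharyya) separation bound
\[
-\log\int\sqrt{p_\lambda p_{\lambda'}}\gtrsim_\Lambda \frac{d(\lambda-\lambda')^2}{(1+\alpha^{-1})^2}
\]
for $|\lambda-\lambda'|$ up to order one. To obtain it, I would use that $P_\lambda$ is itself strongly log-concave with explicit upper and lower Hessian bounds, and compare both marginals through the radial statistic $T$. Its means differ by $|\lambda^{-1}-\lambda'^{-1}|$, while its fluctuations are of order $(1+\alpha^{-1})d^{-1/2}$ with sub-exponential tails (log-Sobolev concentration of $\|Y\|$). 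A test based on $T$ then bounds the affinity.
\item Cover $\Lambda$ by a one-dimensional grid of mesh $d^{-1}$ or finer, and control the likelihood's variation between grid points using $\ell'=\tfrac12(-d/\lambda+D_\lambda)$ together with monotonicity of $D_\lambda$.
\item Apply a union bound via the standard inequality $\mathbb P(p_{\lambda'}(Y)\ge p_{\lambda^\star}(Y))\le\int\sqrt{p_{\lambda'}p_{\lambda^\star}}$. The grid has $O(d)$ points, so this gives $|\widehat\lambda-\lambda^\star|^2\lesssim(1+\alpha^{-1})^2\log(2d)/d$ with high probability. Integrating the tail gives the stated bound, with the $\log(2d)$ factor coming from the union over the grid.
\end{enumerate}

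Combining the lower bound with the norm-estimator upper bound gives $\mathcal R^\star_{\mathrm{SLC}}\asymp_\Lambda\rho_{\alpha,d}(\Lambda)$.
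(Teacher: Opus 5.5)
Your lower bound (Gaussian prior $\mathcal N(0,\alpha^{-1}I_d)$, KL between isotropic Gaussians, Pinsker, two-point bound) is essentially the paper's argument. So is your norm estimator (centered energy statistic, Poincar\'e bounds $d/\alpha$ and $4d/\alpha^2$, projection onto $[\lambda_{\max}^{-1},\lambda_{\min}^{-1}]$, $\lambda_{\max}^2$-Lipschitz inversion), and it gives the same constants.

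For the MLE you reach the R\'enyi separation by a different route. The paper writes $D_{1/2}(P_{\lambda^\star}\|P_\lambda)$ as the sum of the two KL divergences from the Hellinger midpoint. It then applies Talagrand's inequality to the two $s_{\alpha,\Lambda}^{-1}$-strongly log-concave marginals and lower-bounds $W_2^2(P_{\lambda^\star},P_\lambda)$ by Gelbrich's covariance bound. This gives $D_{1/2}\ge d(a-a^\star)^2/(16s_{\alpha,\Lambda}^2)$, with $a=\lambda^{-1}$, $a^\star=(\lambda^\star)^{-1}$, at every separation and with no additive loss. It also isolates the structural input as a separation inequality between marginals, which the paper suggests as a template for other classes.

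You instead bound the affinity by a radial test, $\int\sqrt{p_\lambda p_{\lambda'}}\le\sqrt{P_\lambda(A)}+\sqrt{P_{\lambda'}(A^c)}$, using Gaussian concentration of $y\mapsto\|y\|$. This is valid and more elementary: it needs no transport inequality and no Gelbrich bound. It also shows that the likelihood separation is already witnessed by the norm statistic. The costs are an additive constant in $D_{1/2}$ and an $O(s_{\alpha,\Lambda}/d)$ centering correction in $T$, both harmless at the localization radius.

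Be precise about the tails you need. You need the sub-Gaussian regime $\mathbb P(|T-\mathbb E T|>v)\le 2e^{-cdv^2/s_{\alpha,\Lambda}^2}$ for $v\lesssim_\Lambda 1$, which log-Sobolev concentration of $\|Y\|$ provides. A merely sub-exponential tail at the standard-deviation scale would give only $D_{1/2}\gtrsim\sqrt d\,|\lambda-\lambda'|/s_{\alpha,\Lambda}$. That is too weak for the union bound at radius $s_{\alpha,\Lambda}\sqrt{\log(2d)/d}$.

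The net step needs correcting. You need an explicit bound on $D_\lambda$, and monotonicity supplies it if you push it to $\lambda\downarrow 0$, where the posterior equals $P$: this gives $D_\lambda(y)\le\mathbb E_P\|X-y\|^2\le d/\alpha+\|y\|^2$. The paper proves the same bound via a Chebyshev association inequality. Consequently, on the event $\|Y\|^2\lesssim_\Lambda s_{\alpha,\Lambda}(d+t)$, which has probability at least $1-e^{-t}$, the map $\lambda\mapsto\ell(\lambda\mid Y)$ has Lipschitz constant of order $s_{\alpha,\Lambda}(d+t)$, not $d$.

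With mesh $d^{-1}$, the log-likelihood can therefore move by $O(s_{\alpha,\Lambda})$ between neighboring grid points. Absorbing this through $\mathbb P(\ell(\lambda_j\mid Y)-\ell(\lambda^\star\mid Y)\le u)\le e^{u/2}\int\sqrt{p_{\lambda_j}p_{\lambda^\star}}$ adds a term of order $s_{\alpha,\Lambda}^3/d$ to the squared radius. That loses the claimed $\alpha$-dependence when $\alpha$ is small.

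You therefore need mesh of order $(s_{\alpha,\Lambda}(d+t))^{-1}$, which means $O(s_{\alpha,\Lambda}d)$ grid points rather than $O(d)$. The bound $\log N\lesssim_\Lambda\log(2d)$ then holds only after you restrict, as the paper does, to the nontrivial regime $s_{\alpha,\Lambda}^2\log(2d)/d\le\Delta_\Lambda^2$. Outside that regime the claimed bound is the trivial $\Delta_\Lambda^2$.
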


In the factor \(1+\alpha^{-1}\), the constant represents the Gaussian noise
scale and \(\alpha^{-1}\) measures the permitted signal spread. The factor
\(d^{-1}\) reflects averaging in high dimension. The risk saturates at the
squared interval width \(\Delta_\Lambda^2\).

\paragraph{Proof sketch.}
For the lower bound, it suffices to restrict to the Gaussian prior
$P_\alpha=\mathcal N(0,\alpha^{-1}I_d)$, for which the observation law remains Gaussian, and apply a two-point argument.

For the upper bound, writing $a=\lambda^{-1}$, consider
\[
\widetilde a_P(Y)
:=
\frac{\|Y-\mu_P\|^2-\mathbb E_P\|X-\mu_P\|^2}{d}.
\]
This statistic is unbiased for $a$. Strong log-concavity controls both
$\mathbb E_P\|X-\mu_P\|^2$ and
$\operatorname{Var}_P(\|X-\mu_P\|^2)$ through the Poincar\'e inequality, giving variance of order $(1+\alpha^{-1})^2/d$.
Projection onto the admissible variance interval followed by inversion gives the norm estimator.
For its analysis and the MLE argument, see Section~\ref{sec:upper-techniques} and Appendix~\ref{app:slc-proofs}.

\section{Techniques}
\label{sec:techniques}

We now give a technical overview of the main ideas behind the results in
Section~\ref{sec:results}. Complete proofs of the following arguments appear in
Appendices~\ref{app:ld-proofs} and \ref{app:slc-proofs}.

\subsection{Upper-bound techniques}
\label{sec:upper-techniques}

\paragraph{MLE under low intrinsic dimensionality.}
We localize the MLE through a quadratic
comparison of negative log-likelihoods. Recall that
$2\ell'(\lambda\mid Y)=-d/\lambda+D_\lambda(Y)$, where
$D_\lambda(Y)$ is the posterior quadratic error. The blind-denoising analysis gives high-probability control of
$D_\lambda(Y)$ around the true Gaussian noise energy
$d/\lambda^\star$ for each candidate $\lambda\in\Lambda$,
with fluctuations of order $\sqrt{d}+k$, up to logarithmic factors
and constants depending on $\Lambda$~\citep{kadkhodaie2026blinddenoising}.
Integrating the likelihood derivative therefore compares the observed
negative log-likelihood with the Gaussian reference function
\[
\Psi_{\lambda^\star}(\lambda)
=
\frac{d}{2}
\left(
\frac{\lambda}{\lambda^\star}
-1
-\log\frac{\lambda}{\lambda^\star}
\right).
\]
This function has curvature of order $d$, while the approximation error is
linear in $|\lambda-\lambda^\star|$. The excess negative log-likelihood
therefore has a lower bound of the form
$d(\lambda-\lambda^\star)^2-\mathrm{error}\cdot
|\lambda-\lambda^\star|$, and the defining inequality
$\ell(\widehat\lambda_{\mathrm{MLE},P}\mid Y)
\leq \ell(\lambda^\star\mid Y)$ localizes the MLE.

The posterior-energy bound is taken from
\citet{kadkhodaie2026blinddenoising}, where it is used to control posterior
draws. For the fixed-$P$ MLE, we combine this bound with the monotonicity of
$D_\lambda$. It is enough to control $D_\lambda(Y)$ at the endpoints of
$\Lambda$, which gives uniform control of the likelihood comparison over the
entire interval.

\paragraph{Norm estimator under strong log-concavity.}
For a norm statistic, the relevant quantity is the variance of the signal
energy. With $a=\lambda^{-1}$ and $Y=X+\sqrt{a}Z$, we have
\[
\operatorname{Var}(\|Y\|^2)
=
\operatorname{Var}_P(\|X\|^2)
+4a\,\mathbb{E}_P\|X\|^2
+2a^2d.
\]
Subtracting the mean signal energy under $P$ leaves the variance term
$\operatorname{Var}_P(\|X\|^2)$, which covering complexity $k$ does not control:
a distribution with small support complexity can assign mass to substantially
different radii.

Strong log-concavity provides precisely this missing variance control.
After centering at $\mu_P=\mathbb E_PX$, the Poincar\'e inequality
\citep{bakry2014analysis} gives
\[
\mathbb E_P\|X-\mu_P\|^2\leq \frac{d}{\alpha},
\qquad
\operatorname{Var}_P\bigl(\|X-\mu_P\|^2\bigr)
\leq \frac{4d}{\alpha^2}.
\]
The $P$-centered norm statistic therefore estimates the noise variance with
mean-squared error of order $(1+\alpha^{-1})^2/d$. Projection followed by
inversion gives the estimator in Theorem~\ref{thm:slc}.

\paragraph{MLE under strong log-concavity.}
The low-dimensional argument above relies on direct control of the posterior
energy $D_\lambda(Y)$. Under strong log-concavity, the same approach does not
preserve the desired dependence on $\alpha$, so we instead compare the noisy
marginals directly. 

Strong log-concavity is preserved under Gaussian convolution, with
inverse-curvature scale bounded by
\[
s_{\alpha,\Lambda}
:=
\alpha^{-1}+\lambda_{\min}^{-1}.
\]
The change in noise level produces a corresponding separation of the
covariances of $P_{\lambda^\star}$ and $P_\lambda$. Transportation and
covariance inequalities
\citep{saumard2014logconcavity,bakry2014analysis,gelbrich1990formula}
then give
\[
D_{1/2}(P_{\lambda^\star}\|P_\lambda)
\gtrsim_{\Lambda}
\frac{d(\lambda-\lambda^\star)^2}{s_{\alpha,\Lambda}^2}.
\]

This R\'enyi separation gives an exponential likelihood-ratio bound for each
fixed candidate $\lambda$. A one-dimensional net over $\Lambda$, together with
a derivative bound for $\lambda\mapsto\ell(\lambda\mid Y)$, extends the
comparison to the whole interval and localizes the MLE.

\subsection{Lower-bound techniques}
\label{sec:lower-techniques}

Beyond the universal parametric $d^{-1}$ lower bound, the
$k^2/d^2$ term requires a fixed signal law that conceals a change in
observation noise.

It is convenient to parameterize the noise by its variance
$a=\lambda^{-1}$. In the non-saturated regime, take a variance increment
$\delta\asymp_{\Lambda}k/d$ and consider the ideal signal mixture
\[
P^\star
=
\frac{1}{M}
\sum_{m=0}^{M-1}
\mathcal{N}(0,m\delta I_d),
\]
where $M$ is a sufficiently large absolute constant. Under observation-noise
variance $a_0$, the resulting marginal consists of Gaussian components with
variance levels
$a_0,a_0+\delta,\ldots,a_0+(M-1)\delta$. If the observation-noise variance is
increased to $a_1=a_0+\delta$, the same components appear at
$a_0+\delta,\ldots,a_0+M\delta$. Hence the $M-1$ interior components coincide
exactly, and
\[
Q_0^\star-Q_1^\star
=
\frac{1}{M}
\left\{
\mathcal{N}(0,a_0I_d)
-
\mathcal{N}(0,(a_0+M\delta)I_d)
\right\}.
\]
Therefore $\|Q_0^\star-Q_1^\star\|_{\mathrm{TV}}\leq 1/M$.
Figure~\ref{fig:ld-lower-construction} illustrates the overlapping components.

\begin{figure}[t]
    \centering
    \includegraphics[width=0.75\linewidth]
    {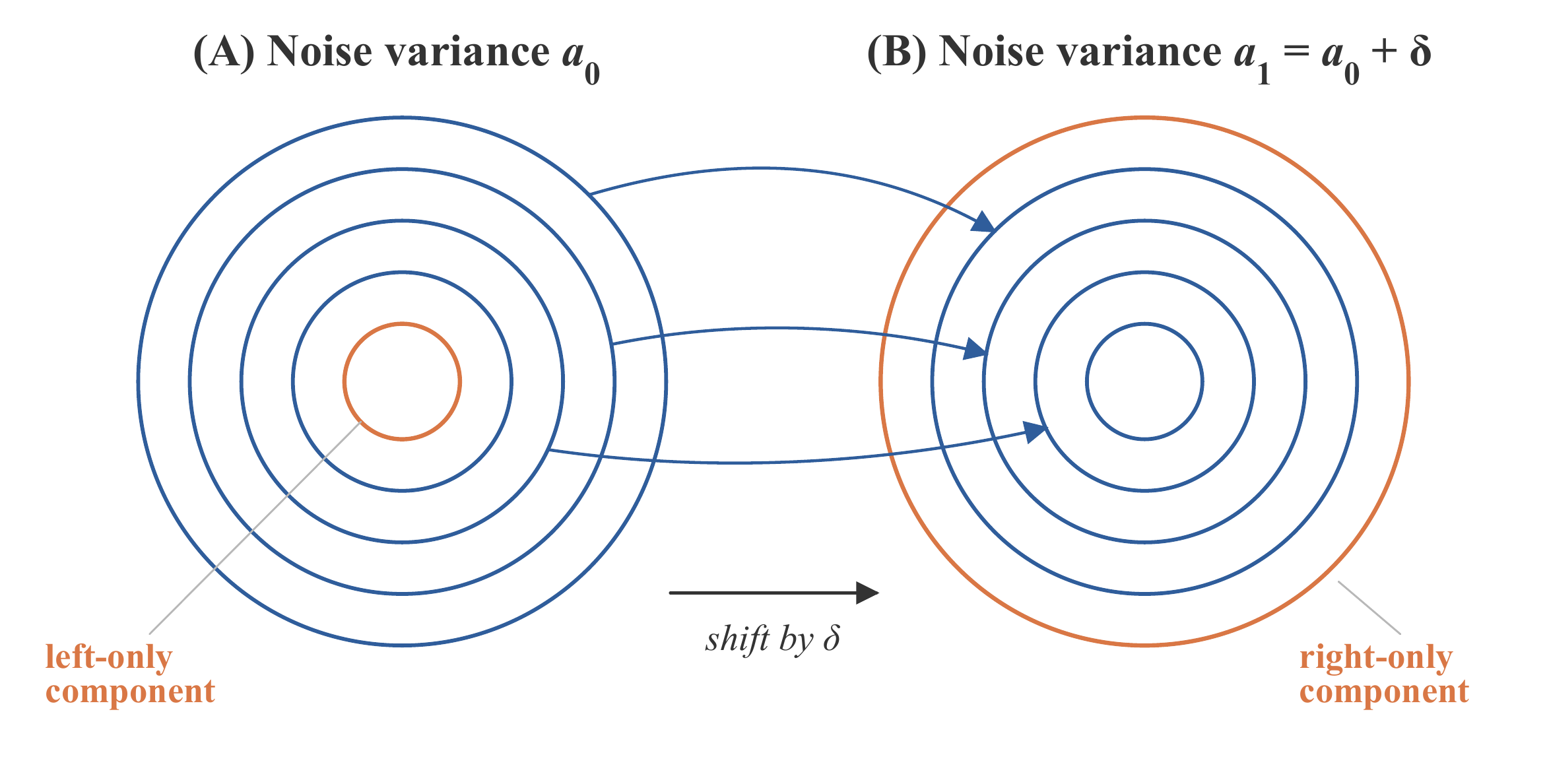}
    \caption{\textbf{Intrinsic-dimensional lower-bound construction.}
    Increasing the observation-noise variance from $a_0$ to $a_0+\delta$
    shifts each Gaussian component by one variance increment. The $M-1$
    interior components align across the two observation laws, leaving only the
    two endpoint components unmatched, each with weight $1/M$.}
    \label{fig:ld-lower-construction}
\end{figure}

This mixture construction is related to lower bounds for noise-level
estimation in semiparametric convolution models,
where latent Gaussian variability can mask changes in the noise variance
\citep{matias2002semiparametric,butucea2005minimax}. Those constructions,
however, allow the signal law to vary between the hypotheses, whereas here the
same fixed signal law must conceal both noise levels.

The ideal mixture has unbounded support and therefore does not satisfy the
covering-number condition. We replace each Gaussian component by a finite
empirical point cloud whose Gaussian smoothing is close to the corresponding
population component in $\chi^2$ divergence. A point-cloud size exponential
in $k$ gives the required approximation while keeping the total number of
support points below $e^k$, and Gaussian concentration keeps the support
radius at order $\sqrt{k}$. This approximation uses the same Gaussian-smoothed empirical-measure
principle studied by \citet{goldfeld2020convergence}, with the approximation
measured here in $\chi^2$ divergence. The resulting deterministic prior belongs to
$\mathcal{P}_{\mathrm{LD}}(k,R)$ and preserves a constant overlap in total
variation between the two observation laws. Since
$\delta\asymp_{\Lambda}k/d$ corresponds to a precision separation of the same
order on the compact interval $\Lambda$, the construction yields the
$k^2/d^2$ term in Theorem~\ref{thm:ld-main} through the standard two-point method~\citep{tsybakov2009introduction}.

For strong log-concavity, the class contains the Gaussian
law \(P_\alpha=\mathcal{N}(0,\alpha^{-1}I_d)\), under which
\(Y\sim\mathcal{N}(0,(\alpha^{-1}+\lambda^{-1})I_d)\). A standard two-point
comparison within this submodel gives the scale
\((1+\alpha^{-1})^2/d\)~\citep{tsybakov2009introduction}.

\section{Conclusion}
\label{sec:conclusion}

In this work, we characterized the minimax risk of estimating a Gaussian noise level from one
high-dimensional observation with a known, structurally constrained signal law.
Under low intrinsic dimensionality, in the regime of
Theorem~\ref{thm:ld-main}, the minimax risk is
$\widetilde{\Theta}_{\Lambda}(d^{-1}+k^2d^{-2})$ up to saturation by
$\Delta_\Lambda^2$, and the fixed-$P$ MLE attains this rate up to logarithmic
factors. Under strong log-concavity, the minimax risk is
$\Theta_{\Lambda}((1+\alpha^{-1})^2/d)$ up to the same saturation, and a
$P$-centered norm estimator is minimax optimal. Thus, low-dimensional support
is sufficient but not necessary for accurate estimation of the noise level. More broadly, these results show that the difficulty of blind noise estimation
is governed by the structure of the signal law rather than by ambient dimension
alone, and that different structural assumptions naturally lead to different
estimation procedures.

The analysis assumes an exactly known signal law and isotropic Gaussian noise
with a single unknown scalar level. How the rates change when the signal law
is learned or misspecified, as in practical diffusion models, remains open.
Other directions include anisotropic or non-Gaussian corruption, as well as
signal classes beyond low intrinsic dimensionality and strong log-concavity. A broader open question is to characterize which properties of the signal law
make blind noise estimation possible. The separation-based argument for the
strongly log-concave case suggests one possible route: establish direct
quantitative separation between the corresponding noisy marginals.

\subsection*{Acknowledgments}

I am deeply grateful to Sinho Chewi and Aram-Alexandre Pooladian for their guidance throughout this project. Their questions, suggestions, and careful feedback helped shape the direction of the work and substantially improved the manuscript. I am especially thankful for the time and care they devoted to this work.

\subsection*{AI use statement}

Generative AI tools were used in this work to assist with research ideation, literature retrieval and discovery, translation, drafting and polishing portions of the manuscript, and checking and developing mathematical arguments and proofs. AI-assisted suggestions were independently reviewed against the relevant literature and mathematical derivations. In particular, all mathematical claims and proofs were independently checked, and all cited references were independently verified. Full responsibility is taken for the final content of the paper, including all statements, proofs, and citations.

\subsection*{Reproducibility statement}

This work is theoretical and does not rely on empirical datasets or numerical
experiments. The statistical formulation and assumptions are given in
Section~\ref{sec:background}, and the main results are stated in
Section~\ref{sec:results}. Section~\ref{sec:techniques} provides an overview of
the proof techniques. Complete proofs and supporting lemmas are provided in
Appendices~\ref{app:preliminaries}, \ref{app:ld-proofs}, and
\ref{app:slc-proofs}.

\bibliography{iclr2027_conference}

@book{tsybakov2009introduction,
  title     = {Introduction to Nonparametric Estimation},
  author    = {Tsybakov, Alexandre B.},
  year      = {2009},
  publisher = {Springer},
  series    = {Springer Series in Statistics},
}

@book{bakry2014analysis,
  title     = {Analysis and Geometry of Markov Diffusion Operators},
  author    = {Bakry, Dominique and Gentil, Ivan and Ledoux, Michel},
  year      = {2014},
  publisher = {Springer},
  series    = {Grundlehren der mathematischen Wissenschaften},
  volume    = {348},
}

@article{saumard2014logconcavity,
  title     = {Log-Concavity and Strong Log-Concavity: A Review},
  author    = {Saumard, Adrien and Wellner, Jon A.},
  journal   = {Statistics Surveys},
  volume    = {8},
  pages     = {45--114},
  year      = {2014},
}

@article{gelbrich1990formula,
  title = {On a Formula for the {$L^2$} {Wasserstein} Metric between Measures on {Euclidean} and {Hilbert} Spaces},
  author    = {Gelbrich, Matthias},
  journal   = {Mathematische Nachrichten},
  volume    = {147},
  number    = {1},
  pages     = {185--203},
  year      = {1990},
}

@inproceedings{sohldickstein2015deep,
  title     = {Deep Unsupervised Learning using Nonequilibrium Thermodynamics},
  author    = {Sohl-Dickstein, Jascha and Weiss, Eric and Maheswaranathan, Niru and Ganguli, Surya},
  booktitle = {Proceedings of the 32nd International Conference on Machine Learning},
  series    = {Proceedings of Machine Learning Research},
  volume    = {37},
  pages     = {2256--2265},
  publisher = {PMLR},
  year      = {2015}
}

@inproceedings{song2021scorebased,
  title     = {Score-Based Generative Modeling through Stochastic Differential Equations},
  author    = {Song, Yang and Sohl-Dickstein, Jascha and Kingma, Diederik P. and Kumar, Abhishek and Ermon, Stefano and Poole, Ben},
  booktitle = {International Conference on Learning Representations},
  year      = {2021}
}

@inproceedings{gnanasambandam2020one,
  title     = {One Size Fits All: Can We Train One Denoiser for All Noise Levels?},
  author    = {Gnanasambandam, Abhiram and Chan, Stanley H.},
  booktitle = {Proceedings of the 37th International Conference on Machine Learning},
  series    = {Proceedings of Machine Learning Research},
  volume    = {119},
  pages     = {3576--3586},
  publisher = {PMLR},
  year      = {2020}
}

@article{miyasawa1961empirical,
  title   = {An Empirical Bayes Estimator of the Mean of a Normal Population},
  author  = {Miyasawa, Koichi},
  journal = {Bulletin of the International Statistical Institute},
  volume  = {38},
  pages   = {181--188},
  year    = {1961}
}

@article{efron2011tweedie,
  title     = {Tweedie's Formula and Selection Bias},
  author    = {Efron, Bradley},
  journal   = {Journal of the American Statistical Association},
  volume    = {106},
  number    = {496},
  pages     = {1602--1614},
  publisher = {Taylor \& Francis},
  year      = {2011}
}

@article{kadkhodaie2026blinddenoising,
  title   = {Blind Denoising Diffusion Models and the Blessings of Dimensionality},
  author  = {Kadkhodaie, Zahra and Pooladian, Aram-Alexandre and Chewi, Sinho and Simoncelli, Eero P.},
  journal = {arXiv preprint arXiv:2602.09639},
  year    = {2026}
}

@article{matias2002semiparametric,
  title   = {Semiparametric Deconvolution with Unknown Noise Variance},
  author  = {Matias, Catherine},
  journal = {ESAIM: Probability and Statistics},
  volume  = {6},
  pages   = {271--292},
  year    = {2002}
}

@article{butucea2005minimax,
  title   = {Minimax Estimation of the Noise Level and of the Deconvolution Density in a Semiparametric Convolution Model},
  author  = {Butucea, Cristina and Matias, Catherine},
  journal = {Bernoulli},
  volume  = {11},
  number  = {2},
  pages   = {309--340},
  year    = {2005}
}

@article{goldfeld2020convergence,
  title   = {Convergence of Smoothed Empirical Measures with Applications to Entropy Estimation},
  author  = {Goldfeld, Ziv and Greenewald, Kristjan and Niles-Weed, Jonathan and Polyanskiy, Yury},
  journal = {IEEE Transactions on Information Theory},
  volume  = {66},
  number  = {7},
  pages   = {4368--4391},
  year    = {2020}
}

@inproceedings{ho2020denoising,
  title     = {Denoising Diffusion Probabilistic Models},
  author    = {Ho, Jonathan and Jain, Ajay and Abbeel, Pieter},
  booktitle = {Advances in Neural Information Processing Systems},
  volume    = {33},
  pages     = {6840--6851},
  year      = {2020}
}

@article{liu2013single,
  title     = {Single-Image Noise Level Estimation for Blind Denoising},
  author    = {Liu, Xinhao and Tanaka, Masayuki and Okutomi, Masatoshi},
  journal   = {IEEE Transactions on Image Processing},
  volume    = {22},
  number    = {12},
  pages     = {5226--5237},
  year      = {2013}
}

@inproceedings{kadkhodaie2021stochastic,
  title     = {Stochastic Solutions for Linear Inverse Problems using the Prior Implicit in a Denoiser},
  author    = {Kadkhodaie, Zahra and Simoncelli, Eero P.},
  booktitle = {Advances in Neural Information Processing Systems},
  volume    = {34},
  pages     = {13242--13254},
  year      = {2021}
}

@inproceedings{sun2025noiseconditioning,
  title     = {Is Noise Conditioning Necessary for Denoising Generative Models?},
  author    = {Sun, Qiao and Jiang, Zhicheng and Zhao, Hanhong and He, Kaiming},
  booktitle = {Proceedings of the 42nd International Conference on Machine Learning},
  series    = {Proceedings of Machine Learning Research},
  volume    = {267},
  pages     = {57469--57502},
  publisher = {PMLR},
  year      = {2025}
}

@inproceedings{heurteldepeiges2024listening,
  title     = {Listening to the Noise: Blind Denoising with {G}ibbs Diffusion},
  author    = {Heurtel-Depeiges, David and Margossian, Charles and Ohana, Ruben and R{\'e}galdo-Saint Blancard, Bruno},
  booktitle = {Proceedings of the 41st International Conference on Machine Learning},
  series    = {Proceedings of Machine Learning Research},
  volume    = {235},
  pages     = {18284--18304},
  publisher = {PMLR},
  year      = {2024}
}

@inproceedings{pope2021intrinsic,
  title     = {The Intrinsic Dimension of Images and Its Impact on Learning},
  author    = {Pope, Phillip and Zhu, Chen and Abdelkader, Ahmed and Goldblum, Micah and Goldstein, Tom},
  booktitle = {International Conference on Learning Representations},
  year      = {2021}
}

@article{huang2024denoising,
  title   = {Denoising Diffusion Probabilistic Models Are Optimally Adaptive to Unknown Low Dimensionality},
  author  = {Huang, Zhihan and Wei, Yuting and Chen, Yuxin},
  journal = {Mathematics of Operations Research},
  year    = {2026}
}

@inproceedings{li2024adapting,
  title     = {Adapting to Unknown Low-Dimensional Structures in Score-Based Diffusion Models},
  author    = {Li, Gen and Yan, Yuling},
  booktitle = {Advances in Neural Information Processing Systems},
  volume    = {37},
  pages     = {126297--126331},
  year      = {2024}
}

@inproceedings{potaptchik2025linear,
  title     = {Linear Convergence of Diffusion Models under the Manifold Hypothesis},
  author    = {Potaptchik, Peter and Azangulov, Iskander and Deligiannidis, George},
  booktitle = {Proceedings of the Thirty-Eighth Conference on Learning Theory},
  series    = {Proceedings of Machine Learning Research},
  volume    = {291},
  pages     = {4668--4685},
  publisher = {PMLR},
  year      = {2025}
}

@inproceedings{liang2025lowdimensional,
  title     = {Low-Dimensional Adaptation of Diffusion Models: Convergence in Total Variation (extended abstract)},
  author    = {Liang, Jiadong and Huang, Zhihan and Chen, Yuxin},
  booktitle = {Proceedings of the Thirty-Eighth Conference on Learning Theory},
  series    = {Proceedings of Machine Learning Research},
  volume    = {291},
  pages     = {3723--3729},
  publisher = {PMLR},
  year      = {2025}
}

@inproceedings{chen2023sampling,
  title     = {Sampling Is as Easy as Learning the Score: Theory for Diffusion Models with Minimal Data Assumptions},
  author    = {Chen, Sitan and Chewi, Sinho and Li, Jerry and Li, Yuanzhi and Salim, Adil and Zhang, Anru R.},
  booktitle = {International Conference on Learning Representations},
  year      = {2023}
}

@inproceedings{chen2023improved,
  title     = {Improved Analysis of Score-Based Generative Modeling: User-Friendly Bounds under Minimal Smoothness Assumptions},
  author    = {Chen, Hongrui and Lee, Holden and Lu, Jianfeng},
  booktitle = {Proceedings of the 40th International Conference on Machine Learning},
  series    = {Proceedings of Machine Learning Research},
  volume    = {202},
  pages     = {4735--4763},
  publisher = {PMLR},
  year      = {2023}
}

@inproceedings{lee2023convergence,
  title     = {Convergence of Score-Based Generative Modeling for General Data Distributions},
  author    = {Lee, Holden and Lu, Jianfeng and Tan, Yixin},
  booktitle = {Proceedings of the 34th International Conference on Algorithmic Learning Theory},
  series    = {Proceedings of Machine Learning Research},
  volume    = {201},
  pages     = {946--985},
  publisher = {PMLR},
  year      = {2023}
}

@inproceedings{wibisono2024optimal,
  title     = {Optimal Score Estimation via Empirical Bayes Smoothing},
  author    = {Wibisono, Andre and Wu, Yihong and Yang, Kaylee Yingxi},
  booktitle = {Proceedings of the Thirty-Seventh Conference on Learning Theory},
  series    = {Proceedings of Machine Learning Research},
  volume    = {247},
  pages     = {4958--4991},
  publisher = {PMLR},
  year      = {2024}
}

@inproceedings{lipman2023flow,
  title     = {Flow Matching for Generative Modeling},
  author    = {Lipman, Yaron and Chen, Ricky T. Q. and Ben-Hamu, Heli and Nickel, Maximilian and Le, Matt},
  booktitle = {International Conference on Learning Representations},
  year      = {2023}
}

@article{lee2026beckmann,
  title   = {Beckmann Transport Models: From Autonomous Flows to One-Step Maps},
  author  = {Lee, Cheuk-Kit and Coeurdoux, Florentin and Chen, Yuyuan and Tang, Sophia and Potaptchik, Peter and Du, Yilun and Albergo, Michael Samuel and Vanden-Eijnden, Eric},
  journal = {arXiv preprint arXiv:2608.01692},
  year    = {2026}
}

@inproceedings{song2019generative,
  title     = {Generative Modeling by Estimating Gradients of the Data Distribution},
  author    = {Song, Yang and Ermon, Stefano},
  booktitle = {Advances in Neural Information Processing Systems},
  volume    = {32},
  year      = {2019},
}

@inproceedings{albergo2023building,
  title     = {Building Normalizing Flows with Stochastic Interpolants},
  author    = {Albergo, Michael S. and Vanden-Eijnden, Eric},
  booktitle = {International Conference on Learning Representations},
  year      = {2023},
}

@inproceedings{liu2023flow,
  title     = {Flow Straight and Fast: Learning to Generate and Transfer Data with Rectified Flow},
  author    = {Liu, Xingchao and Gong, Chengyue and Liu, Qiang},
  booktitle = {International Conference on Learning Representations},
  year      = {2023},
}

@article{zhang2017beyond,
  title = {Beyond a {Gaussian} Denoiser: Residual Learning of Deep {CNN} for Image Denoising},
  author  = {Zhang, Kai and Zuo, Wangmeng and Chen, Yunjin and Meng, Deyu and Zhang, Lei},
  journal = {IEEE Transactions on Image Processing},
  volume  = {26},
  number  = {7},
  pages   = {3142--3155},
  year    = {2017},
}

@article{zhang2018ffdnet,
  title = {{FFDNet}: Toward a Fast and Flexible Solution for {CNN}-Based Image Denoising},
  author  = {Zhang, Kai and Zuo, Wangmeng and Zhang, Lei},
  journal = {IEEE Transactions on Image Processing},
  volume  = {27},
  number  = {9},
  pages   = {4608--4622},
  year    = {2018},
}

@inproceedings{dhariwal2021diffusion,
  title = {Diffusion Models Beat {GANs} on Image Synthesis},
  author    = {Dhariwal, Prafulla and Nichol, Alexander},
  booktitle = {Advances in Neural Information Processing Systems},
  volume    = {34},
  pages     = {8780--8794},
  year      = {2021},
}

@inproceedings{chen2015efficient,
  title     = {An Efficient Statistical Method for Image Noise Level Estimation},
  author    = {Chen, Guangyong and Zhu, Fengyuan and Heng, Pheng-Ann},
  booktitle = {Proceedings of the IEEE International Conference on Computer Vision},
  pages     = {477--485},
  year      = {2015},
}
\bibliographystyle{iclr2027_conference}

\appendix
\section{Preliminaries}
\label{app:preliminaries}

Throughout the appendix, let
\[
    \Lambda=[\lambda_{\min},\lambda_{\max}],
    \qquad
    \Delta_{\Lambda}:=\lambda_{\max}-\lambda_{\min},
\]
and write \(A\lesssim_{\Lambda} B\) when \(A\leq C_{\Lambda}B\) for a
constant depending only on the fixed interval \(\Lambda\). This appendix gives the likelihood identities, testing inequalities, and
moment bounds used to prove Theorems~\ref{thm:ld-main} and
\ref{thm:slc}.

\subsection{Observation law and likelihood identities}
\label{app:likelihood-identities}

Let \(P\) be a probability measure on \(\mathbb{R}^{d}\), and consider
\[
    Y=X+\lambda^{-1/2}Z,
    \qquad
    X\sim P,
    \qquad
    Z\sim\mathcal{N}(0,I_d),
\]
where \(X\) and \(Z\) are independent. The observation law is
\[
    P_{\lambda}:=P*\mathcal{N}(0,\lambda^{-1}I_d),
\]
with density
\[
    p_{\lambda}(y)
    =
    \left(\frac{\lambda}{2\pi}\right)^{d/2}
    \int
    \exp\left(-\frac{\lambda}{2}\|x-y\|^{2}\right)
    P(dx).
\]
The posterior distribution of \(X\) given \(Y=y\) is
\[
    q_{\lambda}(dx\mid y)
    =
    \frac{
        \exp\left(-\frac{\lambda}{2}\|x-y\|^{2}\right)P(dx)
    }{
        \int
        \exp\left(-\frac{\lambda}{2}\|u-y\|^{2}\right)P(du)
    }.
\]
Define the negative log-likelihood and posterior quadratic error by
\[
    \ell(\lambda\mid y):=-\log p_{\lambda}(y),
    \qquad
    D_{\lambda}(y)
    :=
    \mathbb{E}_{q_{\lambda}(\cdot\mid y)}
    \|X-y\|^{2}.
\]

\begin{lemma}[Score and monotonicity identities]
\label{lem:score-monotonicity}
For every \(y\in\mathbb{R}^{d}\) and \(\lambda>0\),
\[
    2\ell'(\lambda\mid y)
    =
    -\frac{d}{\lambda}+D_{\lambda}(y),
\]
and
\[
    \partial_{\lambda}D_{\lambda}(y)
    =
    -\frac{1}{2}
    \operatorname{Var}_{q_{\lambda}(\cdot\mid y)}
    \bigl(\|X-y\|^{2}\bigr)
    \leq 0.
\]
In particular, \(\lambda\mapsto D_{\lambda}(y)\) is nonincreasing.
\end{lemma}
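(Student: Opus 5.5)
The plan is to write the marginal density as an exponential-family-type integral and differentiate under the integral sign. Set
\[
    F(\lambda):=\int \exp\!\left(-\frac{\lambda}{2}\|x-y\|^{2}\right)P(dx),
    \qquad
    \ell(\lambda\mid y)=-\frac d2\log\frac{\lambda}{2\pi}-\log F(\lambda).
\]
For fixed \(y\) and \(\lambda\) in a compact subinterval \([\lambda_1,\lambda_2]\subset(0,\infty)\), the integrand and its \(\lambda\)-derivatives of all orders are dominated by \(\tfrac{1}{2^j}\|x-y\|^{2j}e^{-\lambda_1\|x-y\|^2/2}\). This is bounded uniformly in \(x\), because \(t^je^{-ct}\) is bounded on \([0,\infty)\). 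Since \(P\) is a probability measure, dominated convergence therefore justifies differentiating under the integral sign with no moment assumptions on \(P\). The same domination also gives \(F(\lambda)>0\), so \(\log F\) is smooth.

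The first identity comes from one differentiation:
\[
    F'(\lambda)=-\frac12\int\|x-y\|^2 e^{-\frac{\lambda}{2}\|x-y\|^2}P(dx)=-\frac12 F(\lambda)\,D_\lambda(y).
\]
Here \(D_\lambda(y)\) is recognized as the expectation under the posterior \(q_\lambda(\cdot\mid y)\), which is by definition the normalized tilted measure. Hence
\[
    \ell'(\lambda\mid y)=-\frac{d}{2\lambda}+\frac12 D_\lambda(y),
\]
and multiplying by \(2\) gives the claim.

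For the second identity, write \(T(x)=\|x-y\|^2\), so that \(D_\lambda(y)=\int T e^{-\lambda T/2}dP\big/F(\lambda)\). Differentiating this quotient gives
\[
    \partial_\lambda D_\lambda(y)=-\frac12\,\frac{\int T^2e^{-\lambda T/2}dP}{F}+\frac12\,\frac{\int Te^{-\lambda T/2}dP}{F}\cdot D_\lambda(y).
\]
This equals \(-\tfrac12\left(\mathbb E_{q_\lambda}T^2-(\mathbb E_{q_\lambda}T)^2\right)=-\tfrac12\operatorname{Var}_{q_\lambda}(T)\), which is \(\le 0\). Equivalently, this is the standard fact that \(\log F\) is the log-partition function of the one-parameter family tilted by \(-T/2\), so its second derivative is a variance.

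Monotonicity then follows from the fundamental theorem of calculus, since the derivative is continuous (again by domination) and nonpositive. The only real obstacle is the justification of differentiation under the integral. This is routine, because \(T\ge0\) makes the exponential factor bounded and the polynomial weights are absorbed into it uniformly in \(x\). The finiteness of \(\operatorname{Var}_{q_\lambda}(T)\) comes from the same bound.
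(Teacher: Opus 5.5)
Your proposal is correct and takes essentially the same route as the paper: you differentiate $\log p_\lambda(y)$ under the integral sign to get the score identity, then differentiate the posterior mean of $\|X-y\|^2$ to obtain $-\tfrac12\operatorname{Var}_{q_\lambda}(\|X-y\|^2)$; the paper does this via the general formula $\partial_\lambda\mathbb{E}_{q_\lambda}h=-\tfrac12\operatorname{Cov}_{q_\lambda}(h,\|X-y\|^2)$, you via the quotient rule. Your dominated-convergence justification is a useful addition that the paper omits, with one small correction: $F(\lambda)>0$ follows from strict positivity of the integrand, not from the domination bound.
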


\begin{proof}
Differentiating the marginal log-density gives
\[
    \partial_{\lambda}\log p_{\lambda}(y)
    =
    \frac{d}{2\lambda}
    -
    \frac{1}{2}
    \mathbb{E}_{q_{\lambda}(\cdot\mid y)}
    \|X-y\|^{2},
\]
which proves the score identity. More generally, for any integrable
function \(h\),
\[
    \partial_{\lambda}
    \mathbb{E}_{q_{\lambda}(\cdot\mid y)}h(X)
    =
    -\frac{1}{2}
    \operatorname{Cov}_{q_{\lambda}(\cdot\mid y)}
    \bigl(h(X),\|X-y\|^{2}\bigr).
\]
Taking \(h(X)=\|X-y\|^{2}\) gives the second identity.
\end{proof}

\subsection{Divergences and two-point lower bounds}
\label{app:testing-tools}

For probability measures \(Q\) and \(Q'\), we use
\[
    \|Q-Q'\|_{\mathrm{TV}}
    :=
    \sup_{A}|Q(A)-Q'(A)|,
    \qquad
    \operatorname{KL}(Q\|Q')
    :=
    \int\log\left(\frac{dQ}{dQ'}\right)dQ,
\]
and
\[
    \chi^{2}(Q\|Q')
    :=
    \int
    \left(\frac{dQ}{dQ'}-1\right)^{2}dQ'.
\]
The order-\(1/2\) Rényi divergence is
\[
    D_{1/2}(Q\|Q')
    :=
    -2\log
    \int
    \sqrt{\frac{dQ}{d\nu}\frac{dQ'}{d\nu}}\,d\nu,
\]
where \(\nu\) is any common dominating measure.

We repeatedly use the standard comparisons
\[
    \|Q-Q'\|_{\mathrm{TV}}
    \leq
    \sqrt{\frac{1}{2}\operatorname{KL}(Q\|Q')},
    \qquad
    \|Q-Q'\|_{\mathrm{TV}}
    \leq
    \frac{1}{2}\sqrt{\chi^{2}(Q\|Q')},
\]
as well as the data-processing inequality for relative entropy.

\begin{lemma}[Two-point lower bound]
\label{lem:two-point}
Let \(Q_0,Q_1\) be two observation laws associated with parameters
\(\theta_0,\theta_1\in\mathbb{R}\). Then
\[
    \inf_{\widehat{\theta}}
    \max_{i\in\{0,1\}}
    \mathbb{E}_{Q_i}
    \bigl(\widehat{\theta}-\theta_i\bigr)^2
    \geq
    \frac{|\theta_1-\theta_0|^{2}}{8}
    \bigl(1-\|Q_0-Q_1\|_{\mathrm{TV}}\bigr).
\]
\end{lemma}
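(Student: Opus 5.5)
This is the standard Le Cam reduction from estimation to testing. Fix an arbitrary estimator \(\widehat\theta\) and set \(s:=|\theta_1-\theta_0|/2\). If \(s=0\) there is nothing to prove, so assume \(s>0\).

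\textbf{Step 1: reduce to a test.} Define the test \(\psi:=\mathbbm{1}\{|\widehat\theta-\theta_1|<|\widehat\theta-\theta_0|\}\). By the triangle inequality, \(\psi=1\) forces \(|\widehat\theta-\theta_0|\ge s\), and \(\psi=0\) forces \(|\widehat\theta-\theta_1|\ge s\). Markov's inequality then gives
\[
\mathbb{E}_{Q_0}(\widehat\theta-\theta_0)^2\ge s^2 Q_0(\psi=1),
\qquad
\mathbb{E}_{Q_1}(\widehat\theta-\theta_1)^2\ge s^2 Q_1(\psi=0).
\]

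\textbf{Step 2: bound the maximum by the average.} Since a maximum dominates an average,
\[
\max_i \mathbb{E}_{Q_i}(\widehat\theta-\theta_i)^2\ge \frac{s^2}{2}\bigl(Q_0(\psi=1)+Q_1(\psi=0)\bigr).
\]

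\textbf{Step 3: control the testing error by total variation.} Take \(A=\{\psi=1\}\). Then
\[
Q_0(A)+Q_1(A^c)=1-\bigl(Q_1(A)-Q_0(A)\bigr)\ge 1-\|Q_0-Q_1\|_{\mathrm{TV}},
\]
using the sup definition of total variation from the preceding subsection.

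\textbf{Step 4: combine.} Substituting \(s^2=|\theta_1-\theta_0|^2/4\) yields the lower bound \(\tfrac{|\theta_1-\theta_0|^2}{8}\bigl(1-\|Q_0-Q_1\|_{\mathrm{TV}}\bigr)\) for every \(\widehat\theta\). Taking the infimum over \(\widehat\theta\) proves the claim.

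There is no real obstacle here. The only points needing care are to measure the separation with \(s\) equal to half the distance between \(\theta_0\) and \(\theta_1\) (this is what produces the constant \(1/8\)), and to use the sup-over-sets normalization of total variation, so that no extra factor of \(2\) appears. Measurability of \(\psi\) follows from measurability of \(\widehat\theta\).
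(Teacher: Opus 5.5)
Your proposal is correct. It is the standard Le Cam estimation-to-testing reduction that the paper invokes by citing Tsybakov, Chapter~2, and your bookkeeping (half-separation $s$, maximum bounded below by the average, and the sup-over-sets normalization of total variation) correctly produces the constant $1/8$.
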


\begin{proof}
This is the standard two-point reduction; see
\citet[Chapter~2]{tsybakov2009introduction}.
\end{proof}

For Gaussian laws with a common mean, if \(v_0,v_1>0\), then
\[
    \operatorname{KL}
    \left(
        \mathcal{N}(m,v_0I_d)
        \,\middle\|\,
        \mathcal{N}(m,v_1I_d)
    \right)
    =
    \frac{d}{2}
    \left(
        \frac{v_0}{v_1}
        -1
        -\log\frac{v_0}{v_1}
    \right).
\]
The common mean does not affect the divergence.

\subsection{Radial moments and strong log-concavity}
\label{app:radial-preliminaries}

The variance of the norm statistic depends on the signal's radial moments,
as the following identity shows.

\begin{lemma}[Radial variance identity]
\label{lem:radial-variance}
Suppose that \(X\) has finite fourth moment, let \(m\in\mathbb{R}^d\)
be deterministic, and write \(a=\lambda^{-1}\). Then
\[
    \mathbb{E}_{\lambda,P}\|Y-m\|^{2}
    =
    \mathbb{E}_P\|X-m\|^{2}+ad,
\]
and
\[
    \operatorname{Var}_{\lambda,P}
    \bigl(\|Y-m\|^{2}\bigr)
    =
    \operatorname{Var}_P
    \bigl(\|X-m\|^{2}\bigr)
    +
    4a\,\mathbb{E}_P\|X-m\|^{2}
    +
    2a^{2}d.
\]
Taking \(m=0\) gives the identity used in the discussion of low intrinsic
dimensionality; taking \(m=\mu_P\) gives its centered form for the
strongly log-concave case.
\end{lemma}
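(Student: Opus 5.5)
We prove Lemma~\ref{lem:radial-variance} by writing \(Y-m=(X-m)+\sqrt{a}\,Z\) and expanding the squared norm. Set \(U:=X-m\). Then
\[
    \|Y-m\|^{2}=\|U\|^{2}+2\sqrt{a}\,\langle U,Z\rangle+a\|Z\|^{2}.
\]
Since \(Z\) is independent of \(U\) with \(\mathbb{E}Z=0\) and \(\mathbb{E}\|Z\|^{2}=d\), the cross term has mean zero. Taking expectations gives \(\mathbb{E}\|Y-m\|^{2}=\mathbb{E}_P\|U\|^{2}+ad\), which is the first identity.

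For the variance we write \(\|Y-m\|^{2}\) as the sum of three terms,
\[
    T_1=\|U\|^{2},\qquad T_2=2\sqrt{a}\langle U,Z\rangle,\qquad T_3=a\|Z\|^{2},
\]
and compute each variance and each pairwise covariance.

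\textbf{Variances.} By definition \(\operatorname{Var}T_1=\operatorname{Var}_P(\|U\|^2)\). Since \(\mathbb{E}T_2=0\),
\[
    \operatorname{Var}T_2=\mathbb{E}T_2^2=4a\,\mathbb{E}\bigl[U^\top ZZ^\top U\bigr]=4a\,\mathbb{E}\|U\|^{2},
\]
where we conditioned on \(U\) and used \(\mathbb{E}ZZ^\top=I_d\). Because \(\|Z\|^2\sim\chi^2_d\), we get \(\operatorname{Var}T_3=2a^{2}d\).

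\textbf{Covariances.} \(T_1\) and \(T_3\) are independent, so \(\operatorname{Cov}(T_1,T_3)=0\). For the other two, condition on \(U\):
\[
    \mathbb{E}\bigl[\|U\|^{2}\langle U,Z\rangle\bigr]=\mathbb{E}\bigl[\|U\|^{2}\langle U,\mathbb{E}Z\rangle\bigr]=0,
\]
so \(\operatorname{Cov}(T_1,T_2)=0\). Similarly, by the symmetry \(Z\mapsto -Z\),
\[
    \mathbb{E}\bigl[\langle U,Z\rangle\|Z\|^{2}\bigr]=\mathbb{E}\bigl[\langle U,\mathbb{E}[Z\|Z\|^{2}]\rangle\bigr]=0,
\]
so \(\operatorname{Cov}(T_2,T_3)=0\).

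Summing the three variances gives
\[
    \operatorname{Var}_{\lambda,P}\bigl(\|Y-m\|^{2}\bigr)=\operatorname{Var}_P\bigl(\|X-m\|^{2}\bigr)+4a\,\mathbb{E}_P\|X-m\|^{2}+2a^{2}d.
\]

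The only point requiring care is integrability. The finite fourth moment of \(X\) makes \(T_1\) square-integrable. The product \(\|U\|^2\langle U,Z\rangle\) is integrable by independence, since \(\mathbb{E}\|U\|^{3}<\infty\) and \(\mathbb{E}\|Z\|<\infty\). Hence every conditioning step and every application of Fubini above is justified. Beyond this there is no real obstacle; the argument is a direct moment computation.
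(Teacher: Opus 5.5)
Your proposal is correct and follows essentially the same approach as the paper: set $U=X-m$, expand $\|U+\sqrt{a}Z\|^2$ into three pairwise uncorrelated terms, and sum their variances using $\mathbb{E}\langle U,Z\rangle^2=\mathbb{E}_P\|U\|^2$ and $\operatorname{Var}(\|Z\|^2)=2d$. You spell out the three covariance computations and the integrability checks that the paper asserts in one line.
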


\begin{proof}
Set \(U:=X-m\). Since
\[
    Y-m=U+\sqrt{a}\,Z,
\]
we have
\[
    \|Y-m\|^{2}
    =
    \|U\|^{2}
    +
    2\sqrt{a}\langle U,Z\rangle
    +
    a\|Z\|^{2}.
\]
The three terms on the right are pairwise uncorrelated. Moreover,
\[
    \mathbb{E}\langle U,Z\rangle^{2}
    =
    \mathbb{E}_P\|U\|^{2},
    \qquad
    \operatorname{Var}(\|Z\|^{2})=2d.
\]
Taking expectations and variances proves the identities.
\end{proof}

Strong log-concavity gives the following bounds on these moments.

\begin{lemma}[Consequences of strong log-concavity]
\label{lem:slc-moments}
Let
\[
    P(dx)=Z_V^{-1}e^{-V(x)}dx,
    \qquad
    \nabla^{2}V(x)\succeq \alpha I_d.
\]
Then
\[
    \operatorname{Cov}_P(X)\preceq \alpha^{-1}I_d,
    \qquad
    \mathbb{E}_P\|X-\mu_P\|^{2}\leq \frac{d}{\alpha},
\]
and
\[
    \operatorname{Var}_P
    \bigl(\|X-\mu_P\|^{2}\bigr)
    \leq
    \frac{4d}{\alpha^{2}}.
\]
\end{lemma}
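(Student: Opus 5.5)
The plan is to derive all three bounds from the Poincaré inequality for strongly log-concave measures. By the Bakry--Émery criterion, the Poincaré inequality holds with constant \(\alpha^{-1}\):
\[
    \operatorname{Var}_P(f(X))\leq \alpha^{-1}\,\mathbb{E}_P\|\nabla f(X)\|^{2}
\]
for every locally Lipschitz \(f\in L^2(P)\). See \citet{bakry2014analysis}; we take this as the structural input. Everything else is a choice of test functions.

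\textbf{Covariance and trace bounds.} For a unit vector \(u\), take the linear test function \(f(x)=\langle u,x\rangle\). Then \(\nabla f\equiv u\), so
\[
    u^\top\operatorname{Cov}_P(X)\,u=\operatorname{Var}_P(\langle u,X\rangle)\leq \alpha^{-1}.
\]
Since \(u\) is arbitrary, this gives \(\operatorname{Cov}_P(X)\preceq\alpha^{-1}I_d\). Taking the trace yields
\[
    \mathbb{E}_P\|X-\mu_P\|^{2}=\operatorname{tr}\operatorname{Cov}_P(X)\leq d/\alpha.
\]
Finite second moments are automatic, because strongly log-concave laws have Gaussian tails. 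The same tails also ensure that all moments used below are finite.

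\textbf{Radial variance.} Take \(f(x)=\|x-\mu_P\|^{2}\), whose gradient is \(\nabla f(x)=2(x-\mu_P)\). Poincaré then gives
\[
    \operatorname{Var}_P\bigl(\|X-\mu_P\|^{2}\bigr)
    \leq \frac{4}{\alpha}\,\mathbb{E}_P\|X-\mu_P\|^{2}
    \leq \frac{4d}{\alpha^{2}},
\]
where the last step uses the trace bound from the previous step. The function \(f\) is not globally Lipschitz. This is handled either by a standard truncation argument, or by noting that Poincaré extends to all \(f\) with \(f,\nabla f\in L^2(P)\) by density, which holds here thanks to the Gaussian tails.

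\textbf{Main obstacle.} The only nontrivial ingredient is the Poincaré inequality with constant \(\alpha^{-1}\) for \(\nabla^2V\succeq\alpha I_d\). I would cite it rather than reprove it. If a self-contained argument were wanted, the first route I would try is the Brascamp--Lieb inequality,
\[
    \operatorname{Var}_P(f)\leq\mathbb{E}_P\langle(\nabla^2V)^{-1}\nabla f,\nabla f\rangle,
\]
combined with \((\nabla^2V)^{-1}\preceq\alpha^{-1}I_d\). The remaining technical point is the regularity and integrability needed to apply the inequality to the quadratic test function, which the Gaussian-tail remark covers.
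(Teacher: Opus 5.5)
Your proposal is correct and follows the paper's proof essentially verbatim: you use the Poincar\'e inequality with constant $\alpha^{-1}$, apply it to the linear test functions $\langle u,x\rangle$ for the covariance and trace bounds, and apply it to $\|x-\mu_P\|^2$ for the radial variance. Your remarks on integrability and on extending Poincar\'e to the non-Lipschitz quadratic test function supply regularity details that the paper leaves implicit.
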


\begin{proof}
Strong log-concavity implies the Poincaré inequality
\[
    \operatorname{Var}_P(f)
    \leq
    \frac{1}{\alpha}
    \mathbb{E}_P\|\nabla f(X)\|^{2}
\]
for every sufficiently regular \(f\); see
\citet{bakry2014analysis}. Applying it to
\(f_v(x)=\langle v,x\rangle\) yields
\[
    v^{\top}\operatorname{Cov}_P(X)v
    \leq
    \frac{\|v\|^{2}}{\alpha},
\]
and hence
\[
    \mathbb{E}_P\|X-\mu_P\|^{2}
    =
    \operatorname{tr}\operatorname{Cov}_P(X)
    \leq
    \frac{d}{\alpha}.
\]
Finally, applying the Poincaré inequality to
\(f(x)=\|x-\mu_P\|^{2}\) gives
\[
    \operatorname{Var}_P
    \bigl(\|X-\mu_P\|^{2}\bigr)
    \leq
    \frac{4}{\alpha}
    \mathbb{E}_P\|X-\mu_P\|^{2}
    \leq
    \frac{4d}{\alpha^{2}}.
\]
\end{proof}

\section{Proofs for Low Intrinsic Dimensionality}
\label{app:ld-proofs}

Throughout this section, let
\[
    \Lambda=[\lambda_{\min},\lambda_{\max}],
    \qquad
    \Delta_{\Lambda}:=\lambda_{\max}-\lambda_{\min},
    \qquad
    r_0:=
    \frac{\sqrt{\lambda_{\min}}}
         {\lambda_{\max}\sqrt d},
\]
and write
\[
    P_{\lambda}
    :=
    P * \mathcal{N}(0,\lambda^{-1}I_d)
\]
for the observation law under precision $\lambda$.
To prove Theorem~\ref{thm:ld-main}, we establish the universal parametric
lower bound, construct the additional lower bound from covering complexity,
and bound the MLE risk. The final subsection gives the norm-statistic
calculation used in Section~\ref{sec:upper-techniques}.

\subsection{Universal parametric lower bound}
\label{app:ld-parametric-lb}

The parametric lower bound holds for every fixed signal law $P$.

\begin{proposition}[Universal parametric lower bound]
\label{prop:ld-universal-lb}
For every probability measure $P$ on $\mathbb{R}^d$,
\[
    \mathcal{R}(P;\Lambda)
    \gtrsim
    \min\left\{
        \Delta_{\Lambda}^2,\,
        \frac{\lambda_{\min}^2}{d}
    \right\}.
\]
Consequently,
\[
    \mathcal{R}^{\star}_{\mathrm{LD}}(d,k,R;\Lambda)
    \gtrsim
    \min\left\{
        \Delta_{\Lambda}^2,\,
        \frac{\lambda_{\min}^2}{d}
    \right\}.
\]
\end{proposition}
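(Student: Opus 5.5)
We apply the two-point bound of Lemma~\ref{lem:two-point} to two precisions $\lambda_0=\lambda_{\min}$ and $\lambda_1=\lambda_{\min}+h$, with $h:=\min\{\Delta_\Lambda,\ c\,\lambda_{\min}/\sqrt d\}$ for a small absolute constant $c$. Both precisions lie in $\Lambda$. It then suffices to show $\|P_{\lambda_0}-P_{\lambda_1}\|_{\mathrm{TV}}\le 1/2$ uniformly in $P$. Lemma~\ref{lem:two-point} then gives $\mathcal R(P;\Lambda)\ge h^2/16\gtrsim\min\{\Delta_\Lambda^2,\lambda_{\min}^2/d\}$. Since the infimum there runs over all estimators, in particular those valued in $\Lambda$, this is the claim.

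The key step is to control the divergence between the two observation laws without using any property of $P$. We view each $P_{\lambda_i}$ as the law of $Y$ produced by the Markov kernel $x\mapsto \mathcal N(x,\lambda_i^{-1}I_d)$ applied to the common input $X\sim P$. Equivalently, $P_{\lambda_i}$ is the $Y$-marginal of the joint law of $(X,Y)$. By data processing for relative entropy, or joint convexity,
\[
\operatorname{KL}(P_{\lambda_0}\|P_{\lambda_1})
\le
\int \operatorname{KL}\bigl(\mathcal N(x,\lambda_0^{-1}I_d)\,\|\,\mathcal N(x,\lambda_1^{-1}I_d)\bigr)\,P(dx)
=
\frac d2\Bigl(\frac{\lambda_1}{\lambda_0}-1-\log\frac{\lambda_1}{\lambda_0}\Bigr),
\]
where the last equality is the common-mean Gaussian formula from Appendix~\ref{app:testing-tools} with $v_i=\lambda_i^{-1}$. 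Note that the integrand does not depend on $x$, so the bound holds for every $P$.

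Next we write $t=h/\lambda_{\min}\le c/\sqrt d\le c$. The elementary inequality $t-\log(1+t)\le t^2/2$ for $t\ge0$ gives $\operatorname{KL}\le d t^2/4\le c^2/4$. Pinsker's inequality then yields $\|P_{\lambda_0}-P_{\lambda_1}\|_{\mathrm{TV}}\le c/(2\sqrt2)\le 1/2$ for, say, $c=1$.

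There is no real obstacle here. The only point needing care is the data-processing step, which is what removes any dependence on $P$. It holds because both hypotheses share the same signal law, so the divergence can only shrink under the common mixing. The consequence for $\mathcal R^\star_{\mathrm{LD}}$ is immediate: a supremum over the class $\mathcal P_{\mathrm{LD}}(k,R)$ dominates the bound for any single member, and the class is nonempty, for instance $P=\delta_0$ belongs to it since $k\ge1$.
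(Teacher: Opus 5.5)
Your proposal is correct and takes essentially the same route as the paper. Both lift to the joint law of $(X,Y)$ and use data processing (or the chain rule) to reduce the KL to the $P$-free common-mean Gaussian formula. Both then apply Pinsker and the two-point bound to $\lambda_{\min}$ and a precision $\asymp\min\{\Delta_\Lambda,\lambda_{\min}/\sqrt d\}$ above it. Your explicit constants ($t-\log(1+t)\le t^2/2$, $c=1$) and the observation that $\delta_0\in\mathcal{P}_{\mathrm{LD}}(k,R)$ are harmless refinements of the paper's argument.
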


\begin{proof}
Fix a signal law $P$ and consider two nearby precision parameters. Let
\[
    \lambda_0:=\lambda_{\min},
    \qquad
    \lambda_1:=\lambda_0+2\delta,
\]
where we choose $\delta>0$ below to keep the two observation laws close.

Let $\mu_i:=P_{\lambda_i}$, $i\in\{0,1\}$. Augment the observation $Y$ with the latent signal $X$. Under hypothesis $i$, the
joint law of $(X,Y)$ is
\[
    \widetilde{\mu}_i(dx,dy)
    =
    P(dx)\,
    \mathcal{N}(x,\lambda_i^{-1}I_d)(dy).
\]
Since $\mu_i$ is the $Y$-marginal of $\widetilde{\mu}_i$, the
data-processing inequality gives
\[
    \mathrm{KL}(\mu_0\|\mu_1)
    \leq
    \mathrm{KL}(\widetilde{\mu}_0\|\widetilde{\mu}_1).
\]
The distribution of $X$ is the same under both hypotheses, so the
chain rule for relative entropy yields
\begin{align*}
    \mathrm{KL}(\widetilde{\mu}_0\|\widetilde{\mu}_1)
    &=
    \mathbb{E}_{X\sim P}
    \mathrm{KL}\left(
        \mathcal{N}(X,\lambda_0^{-1}I_d)
        \,\big\|\,
        \mathcal{N}(X,\lambda_1^{-1}I_d)
    \right) \\
    &=
    \mathrm{KL}\left(
        \mathcal{N}(0,\lambda_0^{-1}I_d)
        \,\big\|\,
        \mathcal{N}(0,\lambda_1^{-1}I_d)
    \right) \\
    &=
    \frac{d}{2}
    \left(
        \frac{\lambda_1}{\lambda_0}
        -1
        -\log\frac{\lambda_1}{\lambda_0}
    \right).
\end{align*}
Thus the divergence does not depend on the common mean.

Choose the separation by setting
\[
    \delta
    :=
    c\min\left\{
        \Delta_{\Lambda},\,
        \frac{\lambda_{\min}}{\sqrt d}
    \right\},
\]
where $c>0$ is a sufficiently small absolute constant. Then
$\lambda_1\in\Lambda$, and
$2\delta/\lambda_0\leq 2c$. Using
$u-\log(1+u)\leq Cu^2$ for sufficiently small $u\geq0$, we obtain
\[
    \mathrm{KL}(\mu_0\|\mu_1)
    \leq
    C d\frac{\delta^2}{\lambda_0^2}
    \leq Cc^2.
\]
For sufficiently small $c$, the KL divergence is bounded by a fixed
numerical constant. Pinsker's inequality then gives
$\|\mu_0-\mu_1\|_{\mathrm{TV}}\leq c_0$ for some absolute
$c_0<1$.

The standard two-point reduction for squared-error estimation
therefore implies
\[
    \inf_{\widehat{\lambda}_P}
    \sup_{i\in\{0,1\}}
    \mathbb{E}_{\mu_i}
    \left[
        (\widehat{\lambda}_P-\lambda_i)^2
    \right]
    \gtrsim
    |\lambda_1-\lambda_0|^2
    \bigl(
        1-\|\mu_0-\mu_1\|_{\mathrm{TV}}
    \bigr),
\]
see, e.g., \citet[Chapter~2]{tsybakov2009introduction}. Since
$|\lambda_1-\lambda_0|=2\delta$, this yields
\[
    \mathcal{R}(P;\Lambda)
    \gtrsim
    \delta^2
    \asymp
    \min\left\{
        \Delta_{\Lambda}^2,\,
        \frac{\lambda_{\min}^2}{d}
    \right\}.
\]
Since $P$ was arbitrary, we can take the supremum over
$P\in\mathcal{P}_{\mathrm{LD}}(k,R)$.
\end{proof}

\subsection{Lower bound from covering complexity}
\label{app:ld-intrinsic-lb}

For the second lower bound, we construct one signal law with low covering
complexity for which the observation laws remain close in total variation
at the required precision separation.

\begin{lemma}[Intrinsic-dimensional indistinguishable pair]
\label{lem:ld-indistinguishable-pair}
There exists a sufficiently large absolute constant $k_0$ such that
the following holds. Assume $k_0\leq k\lesssim_{\Lambda} d$ and
$R^2\gtrsim_{\Lambda} k$. Then there exist
$P\in\mathcal{P}_{\mathrm{LD}}(k,R)$ and
$\lambda_0,\lambda_1\in\Lambda$ such that
\[
    |\lambda_1-\lambda_0|
    \gtrsim
    \min\left\{
        \Delta_{\Lambda},\,
        \lambda_{\min}\frac{k}{d}
    \right\},
\]
while
\[
    \left\|
        P_{\lambda_0}-P_{\lambda_1}
    \right\|_{\mathrm{TV}}
    \leq c_0
\]
for some absolute constant $c_0<1$.
\end{lemma}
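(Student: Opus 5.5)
We follow the variance-mixture construction of Section~\ref{sec:lower-techniques}, working in the variance parametrization \(a=\lambda^{-1}\). Fix \(a_1:=\lambda_{\min}^{-1}\) and \(a_0:=a_1-\delta\), where
\[
\delta:=c\min\bigl\{a_1-\lambda_{\max}^{-1},\ \lambda_{\min}^{-1}k/d\bigr\}
\]
for a small constant \(c\). Then \(\lambda_0=a_0^{-1}\) and \(\lambda_1=a_1^{-1}\) both lie in \(\Lambda\). On the compact interval, \(|\lambda_1-\lambda_0|\asymp_\Lambda\delta\), which gives the required separation \(\gtrsim\min\{\Delta_\Lambda,\lambda_{\min}k/d\}\). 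Let \(M\) be a large absolute constant and consider the ideal prior \(P^\star=M^{-1}\sum_{m<M}\mathcal N(0,m\delta I_d)\). The telescoping identity in Section~\ref{sec:lower-techniques} gives \(\|Q_0^\star-Q_1^\star\|_{\mathrm{TV}}\le 1/M\), where \(Q_i^\star=P^\star*\mathcal N(0,a_iI_d)\).

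Next we discretize. For each \(m\ge1\), draw \(n=e^{k/2}\) i.i.d. points \(\xi_{m,1},\dots,\xi_{m,n}\sim\mathcal N(0,m\delta I_d)\), and let \(\widehat P_m\) be their empirical measure. Set \(\widehat P_0=\delta_0\) and \(\widehat P=M^{-1}\sum_m\widehat P_m\). The support has at most \(Mn+1\le e^{k-1}\) points once \(k\ge k_0\), so \(1+\log N(\operatorname{supp}\widehat P,r_0)\le k\) trivially, because a finite set of \(N\) points is covered by \(N\) balls of any radius. For the radius, \(\|\xi\|^2\) concentrates at \(m\delta d\lesssim_\Lambda Mk\), and a union bound over \(Mn=e^{O(k)}\) points with Gaussian norm concentration (tail \(e^{-t^2/(2m\delta)}\) at deviation \(t\asymp\sqrt k\)) shows that all points lie in \(B(0,C_\Lambda\sqrt{k})\) with high probability. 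This is why we assume \(R^2\gtrsim_\Lambda k\).

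The main step is to show that, for \(i=0,1\), the smoothed empirical component \(\widehat P_m*\mathcal N(0,a_iI_d)\) is close in TV to \(\mathcal N(0,(m\delta+a_i)I_d)\). We bound the expected \(\chi^2\). For an empirical mixture of \(n\) i.i.d. kernels, the standard computation gives
\[
\mathbb E\,\chi^2\bigl(\widehat P_m*\varphi_a\,\|\,\mathcal N(0,(m\delta+a)I)\bigr)=\tfrac1n\,\chi^2\text{-type term},
\]
namely \(n^{-1}\bigl(\int \mathbb E_\xi[\varphi_a(y-\xi)^2]/q(y)\,dy-1\bigr)\). For Gaussians this is explicit: with \(s=m\delta\), it equals \(n^{-1}\bigl[(1-s^2/a^2\cdot(\ldots))^{-d/2}-1\bigr]\), which in closed form is \(\bigl((s+a)^2/(a(a+2s))\bigr)^{d/2}=\bigl(1+s^2/(a^2+2as)\bigr)^{d/2}\le\exp(ds^2/(2a^2))\). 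Since \(s\le M\delta\lesssim c M k/d\), the exponent is \(O(c^2M^2k^2/d)\). This is \(\le k/4\) only if \(k\lesssim_\Lambda d\) with a suitably small constant, which is exactly the hypothesis \(k\lesssim_\Lambda d\). Hence \(\mathbb E\chi^2\le e^{k/4}/n=e^{-k/4}\), and Markov's inequality plus a union bound over the \(2M\) pairs \((m,i)\) yields a realization where every \(\chi^2\) is at most \(e^{-k/8}\) and the radius event also holds. Converting via \(\mathrm{TV}\le\tfrac12\sqrt{\chi^2}\) and convexity of TV across mixtures, we get
\[
\|\widehat P_{\lambda_0}-\widehat P_{\lambda_1}\|_{\mathrm{TV}}\le \tfrac1M+2e^{-k/16}\le c_0<1
\]
for \(M\) and \(k_0\) large. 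Fixing this deterministic \(\widehat P\) as \(P\) completes the construction.

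The obstacle we expect is the \(\chi^2\) exponent. It needs \(ds^2/a^2\lesssim k\), that is \(s\lesssim\sqrt{k/d}\), whereas the components have variance up to \(M\delta\asymp k/d\); this fits exactly when \(k\lesssim d\). We will need to tune the constants \(c\) and \(M\) against the implicit constant in \(k\lesssim_\Lambda d\), possibly choosing \(n=e^{k/2}\) versus the support-size budget \(e^{k-1}\) carefully. If the closed form proves too lossy, we will fall back on bounding \(\chi^2\) only for \(m\le M\) with \(\delta\) scaled down by a further constant factor.
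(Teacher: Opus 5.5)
Your construction matches the paper's: the same telescoping mixture $M^{-1}\sum_{m<M}\mathcal N(0,m\delta I_d)$, i.i.d.\ Gaussian point clouds in each layer, an expected-$\chi^2$ bound followed by Markov and a union bound, Gaussian concentration for the radius, and convexity of total variation. The gap is in the explicit $\chi^2$ evaluation that you call the main step; it is incorrect. The quantity $\bigl((a+s)^2/(a(a+2s))\bigr)^{d/2}$ is $\int\varphi_a(y)^2/\varphi_{a+s}(y)\,dy$, which is the value for a kernel centered at $\xi=0$. You dropped the dependence on the random center. In fact
\[
\int\frac{\varphi_a(y-\xi)^2}{\varphi_{a+s}(y)}\,dy
=\Bigl(\frac{(a+s)^2}{a(a+2s)}\Bigr)^{d/2}\exp\Bigl(\frac{\|\xi\|^2}{a+2s}\Bigr),
\qquad
\mathbb E_{\xi\sim\mathcal N(0,sI_d)}\exp\Bigl(\frac{\|\xi\|^2}{a+2s}\Bigr)=\Bigl(1+\frac{2s}{a}\Bigr)^{d/2},
\]
so $\mathbb E\chi^2=n^{-1}\bigl[(1+s/a)^d-1\bigr]$. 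The exponent $d\log(1+s/a)$ is of order $ds/a$, which is first order in $s$, not $ds^2/(2a^2)$. The dominant contribution comes from the random centers. With $s\le M\delta$, $\delta\le c\lambda_{\min}^{-1}k/d$ and $a\ge(1-c)\lambda_{\min}^{-1}$, the exponent is at most $cMk/(1-c)$. This is linear in $k$, not $O(c^2M^2k^2/d)$.

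The error is repairable, but it changes how the constants must be chosen. With $n=e^{k/2}$ you still get $\mathbb E\chi^2\le e^{-k/4}$, provided $c$ is chosen after $M$ with $cM$ below an absolute threshold (e.g.\ $cM\le 1/8$). The condition you derived, $c^2M^2k/d\lesssim 1$, does not imply this. If $cM$ is not small, the exponent $\approx cMk$ can exceed $\log n=k/2$, and the Markov step gives nothing. In particular, $k\lesssim_\Lambda d$ plays no role in the $\chi^2$ step; the paper uses it only in the radius estimate.

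Your obstacle paragraph is therefore backwards. If $s\lesssim\sqrt{k/d}$ really sufficed, you could take $\delta\asymp\sqrt{k/d}$ with a larger $R$ and obtain a lower bound of order $k/d$. That would contradict the $R$-independent MLE upper bound in Theorem~\ref{thm:ld-main}. The linear exponent $ds/a\lesssim k$ is exactly what caps the variance increment at order $k/d$ and produces the $k^2/d^2$ term. The paper encodes this as $C_1\exp(C_2M\delta d/a_0)\le J\le e^{k-1}/M$, with $c$ chosen depending on $M$.

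A minor point: anchoring the pair at $a_1=\lambda_{\min}^{-1}$ gives only $|\lambda_1-\lambda_0|\ge c(\lambda_{\min}/\lambda_{\max})\min\{\Delta_\Lambda,\lambda_{\min}k/d\}$. This loses a $\Lambda$-dependent factor in the $\Delta_\Lambda$ branch. The paper anchors at $a_0=\lambda_{\max}^{-1}$ with $\delta\le ca_0$, which yields the absolute constant in the statement.
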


The construction has an ideal Gaussian mixture and a finite-support
approximation.

\paragraph{Ideal Gaussian-mixture construction.}
It is convenient to work with the noise variance
$a=\lambda^{-1}$. Recall that the admissible variance interval is
\[
    \mathcal{A}_{\Lambda}
    :=
    [\lambda_{\max}^{-1},\lambda_{\min}^{-1}].
\]
Set
\[
    a_0:=\lambda_{\max}^{-1},
    \qquad
    \tau
    :=
    \min\left\{
        \frac{k}{d},\,
        \frac{\Delta_{\Lambda}}{\lambda_{\min}},\,
        1
    \right\},
\]
and let
\[
    \delta:=c a_0\tau,
    \qquad
    a_1:=a_0+\delta,
    \qquad
    \lambda_i:=a_i^{-1},
    \quad i\in\{0,1\},
\]
where $c>0$ is a sufficiently small absolute constant.

Since
\[
    \lambda_{\min}^{-1}-\lambda_{\max}^{-1}
    =
    \frac{\Delta_{\Lambda}}
         {\lambda_{\min}\lambda_{\max}}
    =
    a_0\frac{\Delta_{\Lambda}}{\lambda_{\min}},
\]
the definition of $\tau$ implies
\[
    \delta
    \leq
    c\left(
        \lambda_{\min}^{-1}
        -
        \lambda_{\max}^{-1}
    \right).
\]
Thus, for $c\leq1$, we have
$a_1\in\mathcal{A}_{\Lambda}$. Moreover,
$\delta\leq ca_0$, and hence
\[
    |\lambda_1-\lambda_0|
    =
    \frac{\delta}{a_0(a_0+\delta)}
    =
    \frac{c\lambda_{\max}\tau}{1+c\tau}.
\]
Since $\tau\leq1$, this gives
\[
    |\lambda_1-\lambda_0|
    \gtrsim
    \lambda_{\max}\tau
    \gtrsim
    \min\left\{
        \Delta_{\Lambda},\,
        \lambda_{\min}\frac{k}{d}
    \right\}.
\]
It remains to construct a prior with low covering complexity that keeps the
two observation laws close in total variation.

Fix an integer $M\geq10$, to be chosen as a sufficiently large
absolute constant, and use the convention
$\mathcal{N}(0,0I_d)=\delta_0$. Consider the ideal signal distribution
\[
    P^{\star}
    :=
    \frac{1}{M}
    \sum_{m=0}^{M-1}
    \mathcal{N}(0,m\delta I_d).
\]
For $i\in\{0,1\}$, let
\[
    Q_i^{\star}
    :=
    P^{\star}*
    \mathcal{N}(0,a_iI_d).
\]
By Gaussian convolution,
\[
    Q_0^{\star}
    =
    \frac{1}{M}
    \sum_{m=0}^{M-1}
    \mathcal{N}(0,(a_0+m\delta)I_d),
\]
whereas
\[
    Q_1^{\star}
    =
    \frac{1}{M}
    \sum_{m=0}^{M-1}
    \mathcal{N}(0,(a_0+(m+1)\delta)I_d).
\]
Hence the two mixtures share $M-1$ of their $M$ Gaussian components
and differ only in their two endpoint components:
\[
    Q_0^{\star}-Q_1^{\star}
    =
    \frac{1}{M}
    \left\{
        \mathcal{N}(0,a_0I_d)
        -
        \mathcal{N}(0,(a_0+M\delta)I_d)
    \right\}.
\]
Therefore
\[
    \|Q_0^{\star}-Q_1^{\star}\|_{\mathrm{TV}}
    \leq
    \frac{1}{M}.
\]
The ideal signal law $P^{\star}$ has the required overlap but unbounded
support. To obtain an admissible prior, we replace each Gaussian component by
a finite point cloud.

\paragraph{Finite-support approximation.}
For each $m\in\{0,\ldots,M-1\}$, choose points
$x_{1,m},\ldots,x_{J,m}\in\mathbb{R}^d$ and define
\[
    \widehat{P}_m
    :=
    \frac{1}{J}
    \sum_{j=1}^{J}\delta_{x_{j,m}}.
\]
Define the approximating prior by
\[
    P
    :=
    \frac{1}{M}
    \sum_{m=0}^{M-1}\widehat{P}_m
    =
    \frac{1}{JM}
    \sum_{m=0}^{M-1}
    \sum_{j=1}^{J}
    \delta_{x_{j,m}}.
\]
Since $P$ has at most $JM$ support points,
\[
    N(\operatorname{supp}(P),r_0)\leq JM.
\]
Thus the covering requirement
\[
    1+\log N(\operatorname{supp}(P),r_0)\leq k
\]
holds whenever
\[
    JM\leq e^{k-1},
    \qquad\text{equivalently}\qquad
    J\leq\frac{e^{k-1}}{M}.
\]

For $a>0$, define the smoothed empirical layer
\[
    \widehat{Q}_{m,a}
    :=
    \widehat{P}_m*
    \mathcal{N}(0,aI_d)
    =
    \frac{1}{J}
    \sum_{j=1}^{J}
    \mathcal{N}(x_{j,m},aI_d),
\]
and let
\[
    G_{m,a}
    :=
    \mathcal{N}(0,(a+m\delta)I_d).
\]
We choose the support points so that $\widehat{Q}_{m,a_i}$ approximates
$G_{m,a_i}$ for every layer $m$ and both $i\in\{0,1\}$.

\paragraph{Single-layer $\chi^2$ approximation.}
Fix $m\in\{0,\ldots,M-1\}$ and $a>0$, and set
$b:=m\delta$. Draw
\[
    X_{1,m},\ldots,X_{J,m}
    \stackrel{\mathrm{iid}}{\sim}
    \mathcal{N}(0,bI_d),
    \qquad
    \widehat{P}_m
    :=
    \frac{1}{J}\sum_{j=1}^{J}\delta_{X_{j,m}}.
\]

\begin{lemma}[Single-layer approximation]
\label{lem:ld-single-layer-chi2}
With the notation above,
\[
    \mathbb{E}
    \left[
        \chi^2(
            \widehat{Q}_{m,a}\|G_{m,a}
        )
    \right]
    =
    \frac{1}{J}
    \left[
        \left(1+\frac{b}{a}\right)^d-1
    \right]
    \leq
    \frac{1}{J}
    \exp\left(\frac{bd}{a}\right).
\]
\end{lemma}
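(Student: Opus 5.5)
The plan is a direct second-moment computation with Gaussian integrals. Write $g_{\mu,v}$ for the density of $\mathcal N(\mu,vI_d)$ and $G:=G_{m,a}=\mathcal N(0,(a+b)I_d)$. Since $\widehat Q_{m,a}=\frac1J\sum_j\mathcal N(X_{j,m},aI_d)$, we have
\[
\chi^2(\widehat Q_{m,a}\|G)=\int\frac{\widehat q(y)^2}{g_{0,a+b}(y)}\,dy-1,
\qquad
\widehat q(y)=\frac1J\sum_{j=1}^J g_{X_{j,m},a}(y).
\]
Expanding the square splits this into diagonal terms ($j=j'$) and off-diagonal terms ($j\neq j'$). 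Taking expectations over the i.i.d.\ draws, Fubini gives
\[
\mathbb E\int\frac{\widehat q^2}{g_{0,a+b}}
=\frac1J\,\mathbb E\int\frac{g_{X,a}^2}{g_{0,a+b}}
+\Bigl(1-\frac1J\Bigr)\int\frac{(\mathbb E g_{X,a})^2}{g_{0,a+b}}.
\]

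For the off-diagonal part, $\mathbb E_{X\sim\mathcal N(0,bI_d)}g_{X,a}=g_{0,a+b}$ exactly, so that integral equals $1$. The total is therefore
\[
1+\frac1J\Bigl(\mathbb E\int\frac{g_{X,a}^2}{g_{0,a+b}}-1\Bigr),
\]
and the expected $\chi^2$ equals $\frac1J(\mathbb E\int g_{X,a}^2/g_{0,a+b}-1)$. The remaining task is to show that this inner expectation is $(1+b/a)^d$. Nothing needs to be assumed about $b$ beyond $b\ge0$; the case $m=0$, $b=0$, gives $0$ trivially.

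For the diagonal computation, the problem factorizes over coordinates, so it suffices to treat $d=1$ and then raise to the $d$-th power. There are two routes.

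\emph{Direct route.} Write $\mathbb E_X g_{X,a}(y)^2=\int g_{x,a}(y)^2g_{0,b}(x)\,dx$. Use $g_{x,a}^2=(4\pi a)^{-1/2}g_{x,a/2}$, integrate over $x$ to get $(4\pi a)^{-1/2}g_{0,a/2+b}(y)$, and then divide by $g_{0,a+b}$ and integrate in $y$.

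\emph{Cleaner route.} Use $\int g_{x,a}g_{x',a}/g_{0,s}=\exp(\langle x,x'\rangle/a)$ when $s=a+b$... this identity does not hold in general, so I will instead do the direct Gaussian integral. The resulting ratio
\[
\frac{g_{0,a/2+b}}{g_{0,a+b}}
\]
has exponent $-\frac{y^2}{2}\bigl(\frac1{a/2+b}-\frac1{a+b}\bigr)$, which is integrable because $\frac1{a/2+b}>\frac1{a+b}$. Collecting the prefactors should give $(a+b)/a$ per coordinate. A quick sanity check is the expected answer $(1+b/a)^d$: at $b=0$ it gives $1$, which matches.

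The main obstacle is only bookkeeping in this one-dimensional Gaussian integral. To be safe, I will verify the formula $\mathbb E_X\int g_{X,a}^2/g_{0,a+b}=\frac{a+b}{\sqrt{a(a+2b)}}\cdot\ldots$ carefully and simplify to $1+b/a$. This uses the identity $\int g_{0,u}^{\,}(y)\,e^{+y^2/(2s)}\,dy=(1-u/s)^{-1/2}$ for $u<s$. In particular, after absorbing the normalizations, the per-coordinate value is
\[
\sqrt{\frac{a+b}{2\pi\cdot a/2\cdot 2}}\cdot\Bigl(\frac{(a+b)(a/2+b)}{\cdots}\Bigr)^{1/2},
\]
and I expect it to collapse to $(a+b)/a$.

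Finally, the stated upper bound follows from $(1+b/a)^d-1\le(1+b/a)^d\le e^{bd/a}$, using $1+t\le e^t$.
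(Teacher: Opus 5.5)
Your argument is essentially the paper's: the same diagonal/off-diagonal expansion of $\widehat q^{\,2}$, with the cross terms reduced to $1$ by $\mathbb{E}\,g_{X,a}=g_{0,a+b}$. The only difference is the order of integration in the diagonal term. You integrate out $x$ first, using $g_{x,a}^2=(4\pi a)^{-1/2}g_{x,a/2}$. The paper integrates out $y$ first to get $I(x)=\bigl(\tfrac{(a+b)^2}{a(a+2b)}\bigr)^{d/2}e^{\|x\|^2/(a+2b)}$, and then applies the Gaussian moment generating function of $\|X\|^2$.

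The step you left hedged does close, and you should write it out in place of the garbled display and the abandoned ``cleaner route''. Per coordinate, your identity with $u=a/2+b$ and $s=a+b$ gives $(4\pi a)^{-1/2}\sqrt{2\pi(a+b)}\,\bigl(1-\tfrac{a/2+b}{a+b}\bigr)^{-1/2}=\tfrac{a+b}{a}$, hence $(1+b/a)^d$.
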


\begin{proof}
Let
\[
    \varphi_a(y-x)
    :=
    (2\pi a)^{-d/2}
    \exp\left(
        -\frac{\|y-x\|^2}{2a}
    \right)
\]
be the density of $\mathcal{N}(x,aI_d)$. The density of
$\widehat{Q}_{m,a}$ is
\[
    \widehat{q}(y)
    =
    \frac{1}{J}
    \sum_{j=1}^{J}
    \varphi_a(y-X_{j,m}),
\]
while the density of $G_{m,a}$ is
$g(y):=\varphi_{a+b}(y)$. Since
$X_{j,m}\sim\mathcal{N}(0,bI_d)$, Gaussian convolution gives
\[
    \mathbb{E}\varphi_a(y-X_{j,m})
    =
    \varphi_{a+b}(y)
    =
    g(y),
\]
and hence $\mathbb{E}\widehat{q}(y)=g(y)$.

By definition,
\[
    \chi^2(
        \widehat{Q}_{m,a}\|G_{m,a}
    )
    =
    \int
    \frac{(\widehat{q}(y)-g(y))^2}{g(y)}
    \,dy
    =
    \int
    \frac{\widehat{q}(y)^2}{g(y)}
    \,dy
    -1.
\]
Expanding $\widehat{q}^2$ and separating diagonal and
off-diagonal terms gives
\[
    \mathbb{E}
    \int
    \frac{\widehat{q}(y)^2}{g(y)}
    \,dy
    =
    \frac{J(J-1)}{J^2}
    +
    \frac{1}{J}\,
    \mathbb{E}I(X_{1,m}),
\]
where
\[
    I(x)
    :=
    \int
    \frac{\varphi_a(y-x)^2}{g(y)}
    \,dy.
\]
Therefore
\[
    \mathbb{E}
    \chi^2(
        \widehat{Q}_{m,a}\|G_{m,a}
    )
    =
    \frac{1}{J}
    \left\{
        \mathbb{E}I(X_{1,m})-1
    \right\}.
\]

A direct Gaussian calculation yields
\[
    I(x)
    =
    \left(
        \frac{(a+b)^2}{a(a+2b)}
    \right)^{d/2}
    \exp\left(
        \frac{\|x\|^2}{a+2b}
    \right).
\]
Since $X_{1,m}\sim\mathcal{N}(0,bI_d)$,
\[
    \mathbb{E}
    \exp\left(
        \frac{\|X_{1,m}\|^2}{a+2b}
    \right)
    =
    \left(
        \frac{a+2b}{a}
    \right)^{d/2}.
\]
Consequently,
\[
    \mathbb{E}I(X_{1,m})
    =
    \left(
        1+\frac{b}{a}
    \right)^d,
\]
and hence
\[
    \mathbb{E}
    \left[
        \chi^2(
            \widehat{Q}_{m,a}\|G_{m,a}
        )
    \right]
    =
    \frac{1}{J}
    \left[
        \left(
            1+\frac{b}{a}
        \right)^d
        -1
    \right].
\]
The final inequality follows from
$(1+u)^d\leq e^{du}$ for $u\geq0$.
\end{proof}

\paragraph{Simultaneous approximation and support control.}
We now choose a deterministic realization of the point clouds that satisfies the
approximation bounds for all layers and both noise variances. For
each $m\in\{0,\ldots,M-1\}$, draw
\[
    X_{1,m},\ldots,X_{J,m}
    \stackrel{\mathrm{iid}}{\sim}
    \mathcal{N}(0,m\delta I_d).
\]
Lemma~\ref{lem:ld-single-layer-chi2} gives, for
$a\in\{a_0,a_1\}$,
\[
    \mathbb{E}
    \left[
        \chi^2(
            \widehat{Q}_{m,a}\|G_{m,a}
        )
    \right]
    \leq
    \frac{1}{J}
    \exp\left(
        \frac{M\delta d}{a_0}
    \right).
\]
Fix a sufficiently small absolute constant $C_0>0$ and a sufficiently
large absolute constant $M$. Then choose $c>0$ sufficiently small,
depending only on $M$, and choose $J$ large enough that
\[
    J
    \geq
    C_1
    \exp\left(
        C_2\frac{M\delta d}{a_0}
    \right),
\]
where $C_1,C_2>0$ are sufficiently large constants depending only on
$C_0$ and $M$. With these choices,
\[
    \mathbb{E}
    \left[
        \chi^2(
            \widehat{Q}_{m,a}\|G_{m,a}
        )
    \right]
    \leq
    \frac{C_0}{8M}
\]
for every $m$ and $a\in\{a_0,a_1\}$. Markov's inequality therefore
gives
\[
    \mathbb{P}
    \left(
        \chi^2(
            \widehat{Q}_{m,a}\|G_{m,a}
        )
        >C_0
    \right)
    \leq
    \frac{1}{8M}.
\]
A union bound over the $2M$ pairs $(m,a)$ shows that, with probability
at least $3/4$,
\[
    \chi^2(
        \widehat{Q}_{m,a}\|G_{m,a}
    )
    \leq C_0
\]
simultaneously for every $m$ and
$a\in\{a_0,a_1\}$. On this event,
\[
    \|
        \widehat{Q}_{m,a}-G_{m,a}
    \|_{\mathrm{TV}}
    \leq
    \sqrt{C_0},
\]
where we used the standard comparison between total variation and
$\chi^2$ divergence.

We must also ensure that the support lies inside $B(0,R)$. Standard
Gaussian concentration and a union bound imply that, with probability
at least $3/4$,
\[
    \max_{0\leq m\leq M-1}
    \max_{1\leq j\leq J}
    \|X_{j,m}\|^2
    \leq
    C M\delta
    \bigl\{
        d+\log(JM)
    \bigr\}.
\]

The chosen variance increment must satisfy both the approximation and
covering-number requirements. Recall that
\[
    \delta
    =
    c a_0\tau,
    \qquad
    \tau
    =
    \min\left\{
        \frac{k}{d},\,
        \frac{\Delta_{\Lambda}}{\lambda_{\min}},\,
        1
    \right\}.
\]
Since $\tau\leq k/d$,
\[
    \frac{M\delta d}{a_0}
    =
    Mc\tau d
    \leq
    Mc k.
\]
Hence, if the absolute constant $c>0$ is sufficiently small, the requirements
\[
    C_1
    \exp\left(
        C_2\frac{M\delta d}{a_0}
    \right)
    \leq J
    \leq
    \frac{e^{k-1}}{M}
\]
are compatible for all sufficiently large $k$. We may therefore
choose an integer $J$ satisfying both inequalities.

The upper bound $JM\leq e^{k-1}$ gives
\[
    1+\log N(
        \operatorname{supp}(P),r_0
    )
    \leq k.
\]
Moreover, $\log(JM)\leq k$, so the support estimate becomes
\[
    \max_{m,j}
    \|X_{j,m}\|^2
    \leq
    CM\delta(d+k)
    \leq
    CMca_0\frac{k}{d}(d+k)
    \lesssim_{\Lambda}
    k,
\]
where we used $\tau\leq k/d$ and
$k\lesssim_{\Lambda}d$. Hence
$\operatorname{supp}(P)\subseteq B(0,R)$ whenever
$R^2\gtrsim_{\Lambda}k$.

The simultaneous approximation event and the support-control event
each have probability at least $3/4$, so their intersection has
positive probability. We can therefore fix a deterministic collection of support points
$\{x_{j,m}\}$ satisfying both bounds.

\paragraph{Total-variation comparison.}
Fix such a deterministic realization and define
\[
    P
    =
    \frac{1}{JM}
    \sum_{m=0}^{M-1}
    \sum_{j=1}^{J}
    \delta_{x_{j,m}}.
\]
For $i\in\{0,1\}$, let
\[
    \mu_i
    :=
    P*
    \mathcal{N}(0,a_iI_d).
\]
For $a=a_0$,
\[
    \mu_0
    =
    \frac{1}{M}
    \sum_{m=0}^{M-1}
    \widehat{Q}_{m,a_0},
    \qquad
    Q_0^\star
    =
    \frac{1}{M}
    \sum_{m=0}^{M-1}
    G_{m,a_0}.
\]
By convexity of total variation,
\[
    \|\mu_0-Q_0^\star\|_{\mathrm{TV}}
    \leq
    \frac{1}{M}
    \sum_{m=0}^{M-1}
    \|
        \widehat{Q}_{m,a_0}-G_{m,a_0}
    \|_{\mathrm{TV}}
    \leq
    \sqrt{C_0}.
\]
Similarly,
\[
    \|\mu_1-Q_1^\star\|_{\mathrm{TV}}
    \leq
    \sqrt{C_0}.
\]
Combining these estimates with
$\|Q_0^\star-Q_1^\star\|_{\mathrm{TV}}\leq1/M$ gives
\[
    \|\mu_0-\mu_1\|_{\mathrm{TV}}
    \leq
    \|\mu_0-Q_0^\star\|_{\mathrm{TV}}
    +
    \|Q_0^\star-Q_1^\star\|_{\mathrm{TV}}
    +
    \|Q_1^\star-\mu_1\|_{\mathrm{TV}}
    \leq
    2\sqrt{C_0}+\frac{1}{M}.
\]
Choosing $C_0>0$ sufficiently small and then $M$ sufficiently large
yields
\[
    \|\mu_0-\mu_1\|_{\mathrm{TV}}
    \leq c_0
\]
for some absolute constant $c_0<1$.

\paragraph{Completion of the construction.}
For bounded $k$, the universal $d^{-1}$ lower bound from
Proposition~\ref{prop:ld-universal-lb} dominates the contribution from covering
complexity. We can therefore restrict to $k\geq k_0$ for a sufficiently large
numerical constant $k_0$.

Recall that
\[
    a_0=\lambda_{\max}^{-1},
    \qquad
    \tau
    =
    \min\left\{
        \frac{k}{d},\,
        \frac{\Delta_{\Lambda}}{\lambda_{\min}},\,
        1
    \right\},
    \qquad
    \delta=ca_0\tau,
\]
and
\[
    a_1=a_0+\delta,
    \qquad
    \lambda_i=a_i^{-1}.
\]
By construction, $a_0,a_1\in\mathcal{A}_{\Lambda}$, and hence
$\lambda_0,\lambda_1\in\Lambda$. Furthermore,
\[
    |\lambda_1-\lambda_0|
    =
    \frac{\delta}{a_0(a_0+\delta)}
    =
    \frac{c\lambda_{\max}\tau}{1+c\tau}
    \gtrsim
    \min\left\{
        \Delta_{\Lambda},\,
        \lambda_{\min}\frac{k}{d}
    \right\}.
\]

The deterministic point cloud constructed above satisfies both
\[
    1+\log
    N(\operatorname{supp}(P),r_0)
    \leq k
\]
and $\operatorname{supp}(P)\subseteq B(0,R)$, so
$P\in\mathcal{P}_{\mathrm{LD}}(k,R)$. The corresponding observation
laws satisfy
\[
    \|
        P_{\lambda_0}-P_{\lambda_1}
    \|_{\mathrm{TV}}
    \leq c_0<1.
\]
This proves Lemma~\ref{lem:ld-indistinguishable-pair}.

Applying the standard two-point lower bound to the pair supplied by
Lemma~\ref{lem:ld-indistinguishable-pair} yields
\[
    \mathcal{R}^{\star}_{\mathrm{LD}}(d,k,R;\Lambda)
    \gtrsim
    \min\left\{
        \Delta_{\Lambda}^2,\,
        \lambda_{\min}^2\frac{k^2}{d^2}
    \right\}.
\]
Together with Proposition~\ref{prop:ld-universal-lb}, which gives
\[
    \mathcal{R}^{\star}_{\mathrm{LD}}(d,k,R;\Lambda)
    \gtrsim
    \min\left\{
        \Delta_{\Lambda}^2,\,
        \frac{\lambda_{\min}^2}{d}
    \right\},
\]
we obtain
\[
    \mathcal{R}^{\star}_{\mathrm{LD}}(d,k,R;\Lambda)
    \gtrsim
    \min\left\{
        \Delta_{\Lambda}^2,\,
        \lambda_{\min}^2
        \left(
            \frac{1}{d}
            +
            \frac{k^2}{d^2}
        \right)
    \right\}.
\]
Here we used the elementary equivalence
\[
    \max\{\min(A,u),\min(A,v)\}
    \asymp
    \min\{A,u+v\}
\]
for nonnegative $A,u,v$.
\subsection{MLE upper bound}
\label{app:ld-mle-upper}

We now prove the MLE upper bound. Fix
$P\in\mathcal{P}_{\mathrm{LD}}(k,R)$ and let
$\lambda^\star\in\Lambda$ denote the true precision, so that
$Y\sim P_{\lambda^\star}$. Recall
\[
    \ell(\lambda\mid y)
    :=
    -\log p_{\lambda}(y),
    \qquad
    D_{\lambda}(y)
    :=
    \mathbb{E}_{q_{\lambda}(\cdot\mid y)}
    \|X-y\|^2,
\]
and the score identity
\[
    2\ell'(\lambda\mid y)
    =
    -\frac{d}{\lambda}
    +
    D_{\lambda}(y).
\]

Low intrinsic dimensionality bounds the posterior quadratic error uniformly
over candidate precisions.

\begin{lemma}[Uniform posterior-energy approximation]
\label{lem:ld-posterior-energy}
Let $P\in\mathcal{P}_{\mathrm{LD}}(k,R)$,
$\lambda^\star\in\Lambda$, and $Y\sim P_{\lambda^\star}$. For every
$\eta\in(0,1/2)$, with probability at least $1-\eta$,
\[
    \sup_{\lambda\in\Lambda}
    \left|
        D_{\lambda}(Y)
        -
        \frac{d}{\lambda^\star}
    \right|
    \leq
    B_{\eta},
\]
where
\[
    B_{\eta}
    :=
    \frac{C}{\lambda_{\min}}
    \left(
        \sqrt{
            d\log\frac{2}{\eta}
        }
        +
        k
        +
        \log\frac{2}{\eta}
    \right)
\]
for a numerical constant $C>0$.
\end{lemma}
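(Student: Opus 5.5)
By the monotonicity in Lemma~\ref{lem:score-monotonicity}, $\lambda\mapsto D_\lambda(Y)$ is nonincreasing. Hence for every $\lambda\in\Lambda$,
\[
D_{\lambda_{\max}}(Y)\le D_\lambda(Y)\le D_{\lambda_{\min}}(Y).
\]
It therefore suffices to prove a one-sided lower bound $D_{\lambda_{\max}}(Y)\ge d/\lambda^\star-B_\eta$ and a one-sided upper bound $D_{\lambda_{\min}}(Y)\le d/\lambda^\star+B_\eta$, each with probability at least $1-\eta/2$. In fact I will prove both bounds for an arbitrary fixed $\lambda\in\Lambda$ on a single event that does not depend on $\lambda$. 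Write $Y=X^\star+\sigma_\star Z$ with $X^\star\sim P$, $\sigma_\star=(\lambda^\star)^{-1/2}$, and fix an $r_0$-cover $\{c_1,\dots,c_N\}$ of $\operatorname{supp}(P)$ with $N\le e^{k-1}$. For every $x$,
\[
\|Y-x\|^2=\|X^\star-x\|^2+2\sigma_\star\langle X^\star-x,Z\rangle+\sigma_\star^2\|Z\|^2 .
\]
Conditionally on $X^\star$, I define an event $E$ with $\mathbb P(E)\ge 1-\eta/2$ on which three things hold. First, $|\sigma_\star^2\|Z\|^2-d/\lambda^\star|\lesssim \lambda_{\min}^{-1}\bigl(\sqrt{d\log(2/\eta)}+\log(2/\eta)\bigr)$, by $\chi^2$ concentration. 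Second, $|\langle c_i-X^\star,Z\rangle|\le \|c_i-X^\star\|\,t$ simultaneously for all $i$, where $t:=C\sqrt{k+\log(2/\eta)}$; this follows from a Gaussian union bound over the $N\le e^{k}$ centers. Third, $\|Z\|\lesssim\sqrt d+\sqrt{\log(2/\eta)}$. Every support point $x$ is within $r_0$ of some center, so the cross term changes by at most $r_0\|Z\|\lesssim r_0\sqrt d$ when $x$ is replaced by its center. Since $\lambda\sigma_\star r_0\sqrt d\le 1$ by the choice of $r_0$, this error is $O(1/\lambda_{\min})$. On $E$ we thus obtain, uniformly over $x\in\operatorname{supp}(P)$,
\[
\|Y-x\|^2=\sigma_\star^2\|Z\|^2+\|X^\star-x\|^2\pm 2\sigma_\star t\,\|X^\star-x\|\pm O(\lambda_{\min}^{-1}).
\]

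\emph{Lower bound.} Completing the square gives $\|X^\star-x\|^2-2\sigma_\star t\|X^\star-x\|\ge-\sigma_\star^2t^2$. Hence every support point satisfies $\|Y-x\|^2\ge d/\lambda^\star-B_\eta$. Averaging over the posterior $q_\lambda(\cdot\mid Y)$ gives $D_\lambda(Y)\ge d/\lambda^\star-B_\eta$, with no condition on where the posterior mass sits.

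\emph{Upper bound.} Here I must show that the posterior places little mass on points $x$ far from $X^\star$. The main obstacle is that the prior mass near $X^\star$ could be tiny. I handle it with a second event, this time on $X^\star$. Let $A_i$ be the cover cell containing $X^\star$. Because $X^\star$ is a draw from $P$, with probability at least $1-\eta/4$ the cell satisfies $P(A_i)\ge \eta/(4N)$: the cells carrying less mass have total mass at most $\eta/4$. For $x'\in A_i$, the expansion above (the $\sigma_\star^2\|Z\|^2$ term cancels in differences) gives
\[
\tfrac{\lambda}{2}\bigl(\|Y-x'\|^2-\sigma_\star^2\|Z\|^2\bigr)=O(1).
\]
Now set $s:=\|X^\star-x\|$. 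The posterior mass of $\{x: s\ge\rho\}$ is then at most
\[
\frac{4N}{\eta}\exp\Bigl(-\tfrac{\lambda}{2}\bigl(\rho^2-2\sigma_\star t\rho\bigr)+O(1)\Bigr).
\]
Choosing $\rho^2\asymp(k+\log(2/\eta))/\lambda_{\min}$ makes this small, and integrating the resulting Gaussian-type tail yields
\[
\mathbb E_{q_\lambda}\|X-X^\star\|^2\lesssim \frac{k+\log(2/\eta)}{\lambda_{\min}} .
\]
Plugging this into the expansion, and using $2\sigma_\star t\,\mathbb E_{q_\lambda}\|X-X^\star\|\le \sigma_\star^2t^2+\mathbb E_{q_\lambda}\|X-X^\star\|^2$, gives $D_\lambda(Y)\le d/\lambda^\star+B_\eta$.

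A union bound over the $\chi^2$-concentration event, the cover-projection event, and the large-cell event gives total probability at least $1-\eta$. Combined with the monotonicity reduction, this yields the uniform bound over $\lambda\in\Lambda$. The delicate step is the upper bound's localization. One must check that the $e^{k}$ cardinality of the cover and the $1/\eta$ small-cell loss enter only additively, through $\log(N/\eta)\lesssim k+\log(2/\eta)$, and that the $r_0$-discretization error is $O(1/\lambda_{\min})$ rather than growing with $d$; this is exactly where the specific resolution $r_0=\sqrt{\lambda_{\min}}/(\lambda_{\max}\sqrt d)$ is used.
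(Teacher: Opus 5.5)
Your proof is correct, and it takes a different route from the paper. The paper does not prove the concentration itself. It imports Proposition~F.2 of \citet{kadkhodaie2026blinddenoising} as a bound on $|D_\lambda(Y)-d/\lambda^\star|$ for a single fixed $\lambda$, applies it only at $\lambda_{\min}$ and $\lambda_{\max}$ with confidence $\eta/2$ each, and then uses monotonicity of $\lambda\mapsto D_\lambda(Y)$ to cover every intermediate $\lambda$.

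You instead prove the concentration from first principles. You expand $\|Y-x\|^2$ and control the cross term over an $r_0$-net by a Gaussian union bound, which costs $\log N\le k$. The lower bound then holds deterministically on that event, by completing the square. The upper bound comes from localizing the posterior around $X^\star$ via the heavy-cell event $P(A_{i(X^\star)})\ge\eta/(4N)$. Your good event does not depend on $\lambda$, so your bound is automatically uniform over $\Lambda$. The monotonicity reduction you open with is therefore superfluous in your argument, whereas it is essential in the paper's, where the imported bound holds for one fixed $\lambda$ at a time.

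What each route buys: the paper's is short and modular, since any fixed-$\lambda$ bound upgrades to a uniform one at the cost of a two-point union bound. However, it hides where the covering hypothesis and the resolution $r_0$ enter. Yours is self-contained and makes explicit that $r_0$ is chosen so that $\lambda\sigma_\star r_0\sqrt d\le 1$, which keeps the discretization error bounded in the likelihood exponent.

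One small bookkeeping point. The within-cell and net-discretization errors in the exponent are really $O\bigl(1+\sqrt{(k+\log(2/\eta))/d}\bigr)$ rather than $O(1)$. This comes from the $\sqrt{\log(2/\eta)}$ part of your bound on $\|Z\|$ and from the $t r_0$ term incurred when passing from a center to a support point. That quantity is at most $O(k+\log(2/\eta))$, so enlarging the constant in $\rho^2$ absorbs it and your conclusion is unaffected.
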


\begin{proof}
Fix a candidate precision $\lambda\in\Lambda$ and write
\[
    \widetilde{\sigma}
    :=
    (\lambda^\star)^{-1/2},
    \qquad
    \sigma:=\lambda^{-1/2}.
\]
Proposition~F.2 of
\citet{kadkhodaie2026blinddenoising}, applied with
$\sigma_T=\lambda_{\max}^{-1/2}$,
$\sigma_0=\lambda_{\min}^{-1/2}$, and $\ell=2$, gives exactly the
required bound for $D_{\lambda}(Y)$. In the present notation, with
probability at least $1-\delta$,
\[
    \left|
        D_{\lambda}(Y)
        -
        \frac{d}{\lambda^\star}
    \right|
    \leq
    C
    \left[
        \frac{1}{\lambda^\star}
        \sqrt{
            d\log\frac{1}{\delta}
        }
        +
        \left(
            \frac{1}{\lambda^\star}
            +
            \frac{1}{\lambda}
        \right)
        \left(
            k+\log\frac{1}{\delta}
        \right)
    \right].
\]
Since $\lambda,\lambda^\star\geq\lambda_{\min}$, the right-hand side
is at most
\[
    \frac{C}{\lambda_{\min}}
    \left(
        \sqrt{
            d\log\frac{1}{\delta}
        }
        +
        k
        +
        \log\frac{1}{\delta}
    \right).
\]

We apply this estimate only at the two endpoints
$\lambda_{\min}$ and $\lambda_{\max}$, each with
$\delta=\eta/2$. A union bound shows that, with probability at least
$1-\eta$,
\[
    \left|
        D_{\lambda_{\min}}(Y)
        -
        \frac{d}{\lambda^\star}
    \right|
    \leq B_{\eta},
    \qquad
    \left|
        D_{\lambda_{\max}}(Y)
        -
        \frac{d}{\lambda^\star}
    \right|
    \leq B_{\eta}.
\]

Monotonicity extends the endpoint bounds to the entire interval.
Differentiating the posterior expectation with respect to
$\lambda$ gives
\[
    \partial_{\lambda}D_{\lambda}(y)
    =
    -\frac{1}{2}
    \operatorname{Var}_{q_{\lambda}(\cdot\mid y)}
    \left(
        \|X-y\|^2
    \right)
    \leq0.
\]
Hence $D_{\lambda}(y)$ is nonincreasing in $\lambda$, and therefore
\[
    D_{\lambda_{\max}}(Y)
    \leq
    D_{\lambda}(Y)
    \leq
    D_{\lambda_{\min}}(Y)
    \qquad
    \text{for every }\lambda\in\Lambda.
\]
Since both endpoint values lie within $B_{\eta}$ of
$d/\lambda^\star$, the same holds uniformly over
$\lambda\in\Lambda$.
\end{proof}

\begin{proposition}[MLE upper bound]
\label{prop:ld-mle-upper}
Let $P\in\mathcal{P}_{\mathrm{LD}}(k,R)$ and
\[
    \widehat{\lambda}_{\mathrm{MLE},P}(Y)
    \in
    \argmin_{\lambda\in\Lambda}
    \ell(\lambda\mid Y).
\]
For every $\lambda^\star\in\Lambda$ and
$\eta\in(0,1/2)$, with probability at least $1-\eta$ under
$Y\sim P_{\lambda^\star}$,
\[
    \left|
        \widehat{\lambda}_{\mathrm{MLE},P}
        -
        \lambda^\star
    \right|
    \leq
    C
    \frac{\lambda_{\max}^2}{\lambda_{\min}}
    \left(
        \sqrt{
            \frac{\log(2/\eta)}{d}
        }
        +
        \frac{
            k+\log(2/\eta)
        }{d}
    \right).
\]
Consequently,
\[
    \sup_{\lambda^\star\in\Lambda}
    \mathbb{E}_{\lambda^\star,P}
    \left[
        \left(
            \widehat{\lambda}_{\mathrm{MLE},P}
            -
            \lambda^\star
        \right)^2
    \right]
    \lesssim
    \min\left\{
        \Delta_{\Lambda}^2,\,
        \frac{\lambda_{\max}^4}{\lambda_{\min}^2}
        \left[
            \frac{\log(2d)}{d}
            +
            \frac{
                (k+\log(2d))^2
            }{d^2}
        \right]
    \right\}.
\]
\end{proposition}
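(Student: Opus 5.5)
We will work on the event of Lemma~\ref{lem:ld-posterior-energy}, which has probability at least $1-\eta$. On that event
\[
    \sup_{\lambda\in\Lambda}\left|D_\lambda(Y)-\frac{d}{\lambda^\star}\right|\le B_\eta .
\]
Integrating the score identity of Lemma~\ref{lem:score-monotonicity} between $\lambda^\star$ and any $\lambda\in\Lambda$ gives
\[
    2\bigl[\ell(\lambda\mid Y)-\ell(\lambda^\star\mid Y)\bigr]
    =
    \int_{\lambda^\star}^{\lambda}\left(-\frac{d}{t}+D_t(Y)\right)dt .
\]
We then substitute $D_t(Y)=d/\lambda^\star+E_t$ with $|E_t|\le B_\eta$. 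This splits the integral into two pieces. The first is the Gaussian reference $2\Psi_{\lambda^\star}(\lambda)$, since
\[
    \int_{\lambda^\star}^{\lambda}\left(\frac{d}{\lambda^\star}-\frac dt\right)dt
    =
    d\left(\frac{\lambda}{\lambda^\star}-1-\log\frac{\lambda}{\lambda^\star}\right).
\]
The second is a remainder of absolute value at most $B_\eta|\lambda-\lambda^\star|$. Hence
\[
    2\bigl[\ell(\lambda\mid Y)-\ell(\lambda^\star\mid Y)\bigr]
    \ge
    2\Psi_{\lambda^\star}(\lambda)-B_\eta|\lambda-\lambda^\star| .
\]

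Next we lower-bound the curvature of the reference. Write $u=\lambda/\lambda^\star$, so $u\in[\lambda_{\min}/\lambda_{\max},\lambda_{\max}/\lambda_{\min}]$. We use $u-1-\log u=\int_1^u(1-1/s)\,ds\ge (u-1)^2/(2\max\{u,1\})$. This gives
\[
    2\Psi_{\lambda^\star}(\lambda)
    \ge
    \frac{d(\lambda-\lambda^\star)^2}{2\lambda^\star\max\{\lambda,\lambda^\star\}}
    \ge
    \frac{d(\lambda-\lambda^\star)^2}{2\lambda_{\max}^2}.
\]
Apply this at $\lambda=\widehat\lambda_{\mathrm{MLE},P}$. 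The MLE satisfies $\ell(\widehat\lambda\mid Y)\le\ell(\lambda^\star\mid Y)$ because $\lambda^\star\in\Lambda$. It follows that $d|\widehat\lambda-\lambda^\star|^2/(2\lambda_{\max}^2)\le B_\eta|\widehat\lambda-\lambda^\star|$, that is,
\[
    |\widehat\lambda-\lambda^\star|\le \frac{2\lambda_{\max}^2B_\eta}{d} .
\]
Inserting the definition of $B_\eta$ yields the high-probability bound with prefactor $\lambda_{\max}^2/\lambda_{\min}$. The main point to check is that Lemma~\ref{lem:ld-posterior-energy} holds uniformly in $\lambda$, so no chaining over candidate $\lambda$ is needed. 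The monotonicity argument in that lemma already provides this uniformity.

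For the MSE, take $\eta=1/d^2$, so that $\log(2/\eta)\lesssim\log(2d)$; for $d=1$ use $\eta=1/4$ instead. On the good event the squared error is
\[
    \lesssim \frac{\lambda_{\max}^4}{\lambda_{\min}^2}\left[\frac{\log(2d)}{d}+\frac{(k+\log(2d))^2}{d^2}\right].
\]
Off the event, the error is at most $\Delta_\Lambda^2$ because both estimator and parameter lie in $\Lambda$. That event has probability at most $\eta$, so it contributes at most $\Delta_\Lambda^2/d^2$. Since $\Delta_\Lambda\le\lambda_{\max}$, this contribution is at most $\lambda_{\max}^4/(\lambda_{\min}^2d)$, which is absorbed into the first term. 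Finally, the trivial bound $\Delta_\Lambda^2$ on the squared error gives the minimum with $\Delta_\Lambda^2$.
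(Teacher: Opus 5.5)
Your proposal is correct and follows essentially the same route as the paper. You integrate the score identity on the uniform posterior-energy event of Lemma~\ref{lem:ld-posterior-energy} and compare with the Gaussian reference $\Psi_{\lambda^\star}$, whose curvature is of order $d/\lambda_{\max}^2$. Then $\ell(\widehat\lambda_{\mathrm{MLE},P}\mid Y)\le\ell(\lambda^\star\mid Y)$ gives $|\widehat\lambda_{\mathrm{MLE},P}-\lambda^\star|\lesssim\lambda_{\max}^2B_\eta/d$, which you convert to an MSE bound via a polynomially small $\eta$ and the diameter bound $\Delta_\Lambda^2$.

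The differences are cosmetic: you use an elementary bound on $u-1-\log u$ in place of Taylor's theorem, and $\eta=d^{-2}$ in place of $d^{-3}$. Your explicit use of $\Delta_\Lambda\le\lambda_{\max}$ to absorb the off-event contribution fills in a step the paper leaves implicit.
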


\begin{proof}
Define the Gaussian reference excess likelihood
\[
    \Psi_{\lambda^\star}(\lambda)
    :=
    \frac{d}{2}
    \left[
        \frac{\lambda}{\lambda^\star}
        -1
        -
        \log\left(
            \frac{\lambda}{\lambda^\star}
        \right)
    \right].
\]
It satisfies
\[
    \Psi_{\lambda^\star}(\lambda^\star)
    =
    \Psi'_{\lambda^\star}(\lambda^\star)
    =
    0,
\]
and
\[
    \Psi'_{\lambda^\star}(\lambda)
    =
    \frac{1}{2}
    \left(
        -\frac{d}{\lambda}
        +
        \frac{d}{\lambda^\star}
    \right),
    \qquad
    \Psi''_{\lambda^\star}(\lambda)
    =
    \frac{d}{2\lambda^2}.
\]
Since $\lambda\leq\lambda_{\max}$, Taylor's theorem gives
\[
    \Psi_{\lambda^\star}(\lambda)
    \geq
    \frac{d}{4\lambda_{\max}^2}
    (\lambda-\lambda^\star)^2.
\]

Let $B_{\eta}$ be as in
Lemma~\ref{lem:ld-posterior-energy}, and work on the event supplied by
that lemma. By the score identity and the fundamental theorem of
calculus,
\begin{align*}
    &
    \ell(\lambda\mid Y)
    -
    \ell(\lambda^\star\mid Y)
    -
    \Psi_{\lambda^\star}(\lambda)
    \\
    &\qquad
    =
    \frac{1}{2}
    \int_{\lambda^\star}^{\lambda}
    \left(
        D_u(Y)
        -
        \frac{d}{\lambda^\star}
    \right)\,du.
\end{align*}
Therefore
\[
    \left|
        \ell(\lambda\mid Y)
        -
        \ell(\lambda^\star\mid Y)
        -
        \Psi_{\lambda^\star}(\lambda)
    \right|
    \leq
    \frac{B_{\eta}}{2}
    |\lambda-\lambda^\star|.
\]
Combining this with the quadratic lower bound yields
\[
    \ell(\lambda\mid Y)
    -
    \ell(\lambda^\star\mid Y)
    \geq
    \frac{d}{4\lambda_{\max}^2}
    (\lambda-\lambda^\star)^2
    -
    \frac{B_{\eta}}{2}
    |\lambda-\lambda^\star|.
\]

By optimality of the constrained MLE,
\[
    \ell(
        \widehat{\lambda}_{\mathrm{MLE},P}
        \mid Y
    )
    \leq
    \ell(\lambda^\star\mid Y).
\]
Substituting
$\lambda=\widehat{\lambda}_{\mathrm{MLE},P}$ into the preceding
inequality gives
\[
    0
    \geq
    \frac{d}{4\lambda_{\max}^2}
    \left|
        \widehat{\lambda}_{\mathrm{MLE},P}
        -
        \lambda^\star
    \right|^2
    -
    \frac{B_{\eta}}{2}
    \left|
        \widehat{\lambda}_{\mathrm{MLE},P}
        -
        \lambda^\star
    \right|.
\]
Hence
\[
    \left|
        \widehat{\lambda}_{\mathrm{MLE},P}
        -
        \lambda^\star
    \right|
    \leq
    C
    \frac{\lambda_{\max}^2}{d}
    B_{\eta},
\]
which, after substituting the expression for $B_{\eta}$, gives
\[
    \left|
        \widehat{\lambda}_{\mathrm{MLE},P}
        -
        \lambda^\star
    \right|
    \leq
    C
    \frac{\lambda_{\max}^2}{\lambda_{\min}}
    \left(
        \sqrt{
            \frac{\log(2/\eta)}{d}
        }
        +
        \frac{
            k+\log(2/\eta)
        }{d}
    \right).
\]

To bound the mean-squared error using this probability estimate,
assume first that $d\geq2$ and take $\eta=d^{-3}$. On the event above,
\[
    \left|
        \widehat{\lambda}_{\mathrm{MLE},P}
        -
        \lambda^\star
    \right|^2
    \leq
    C
    \frac{\lambda_{\max}^4}{\lambda_{\min}^2}
    \left[
        \frac{\log(2d)}{d}
        +
        \frac{
            (k+\log(2d))^2
        }{d^2}
    \right].
\]
On the complementary event, both
$\widehat{\lambda}_{\mathrm{MLE},P}$ and $\lambda^\star$ lie in
$\Lambda$, so deterministically
\[
    \left|
        \widehat{\lambda}_{\mathrm{MLE},P}
        -
        \lambda^\star
    \right|^2
    \leq
    \Delta_{\Lambda}^2.
\]
Taking expectations gives
\begin{align*}
    &
    \mathbb{E}_{\lambda^\star,P}
    \left[
        \left(
            \widehat{\lambda}_{\mathrm{MLE},P}
            -
            \lambda^\star
        \right)^2
    \right]
    \\
    &\qquad
    \leq
    C
    \frac{\lambda_{\max}^4}{\lambda_{\min}^2}
    \left[
        \frac{\log(2d)}{d}
        +
        \frac{
            (k+\log(2d))^2
        }{d^2}
    \right]
    +
    \Delta_{\Lambda}^2d^{-3}.
\end{align*}
Combining this estimate with the deterministic diameter bound gives
the claimed minimum with $\Delta_{\Lambda}^2$. The bounded-dimensional
case follows after enlarging the numerical constant. The result is
uniform over $\lambda^\star\in\Lambda$ and over fixed
$P\in\mathcal{P}_{\mathrm{LD}}(k,R)$.
\end{proof}

\subsection{Norm-based estimators}
\label{app:norm-estimators}

We give the norm-statistic calculation used in Section~\ref{sec:upper-techniques}.
This is not an impossibility result for all estimators depending on
$\|Y\|^2$. The calculation concerns the standard energy-corrected norm
statistic: its variance cannot be bounded uniformly in terms of the
covering-complexity parameter $k$ alone.

Write
\[
a:=\lambda^{-1},
\qquad
Y=X+\sqrt{a}Z,
\qquad
Z\sim\mathcal N(0,I_d),
\]
with $Z$ independent of $X$. The raw norm statistic
\[
\widetilde a_{\mathrm{raw}}
:=
\frac{\|Y\|^2}{d}
\]
satisfies
\[
\mathbb E_{\lambda,P}\widetilde a_{\mathrm{raw}}
=
a+\frac{\mathbb E_P\|X\|^2}{d}.
\]
Thus the raw norm confounds signal energy with noise energy. For example, if
$P=\delta_\theta$ with $\|\theta\|=r\le R$, then $P$ has covering complexity
one but
\[
\mathbb E_{\lambda,P}\widetilde a_{\mathrm{raw}}-a
=
\frac{r^2}{d}.
\]

Since $P$ is known, this bias can be removed by the energy-corrected statistic
\[
\widetilde a_P(Y)
:=
\frac{\|Y\|^2-\mathbb E_P\|X\|^2}{d},
\qquad
\mathbb E_{\lambda,P}\widetilde a_P(Y)=a.
\]
By Lemma~\ref{lem:radial-variance} with $m=0$,
\[
\operatorname{Var}_{\lambda,P}\!\left(\widetilde a_P(Y)\right)
=
\frac{
\operatorname{Var}_P(\|X\|^2)
+4a\,\mathbb E_P\|X\|^2
+2a^2d
}{d^2}.
\]
The covering-number condition gives no bound on the first term using $k$
alone. For example, take
\[
P=\frac12\delta_0+\frac12\delta_u,
\qquad
\|u\|=r\le R.
\]
This distribution has constant covering complexity, whereas
\[
\mathbb E_P\|X\|^2=\frac{r^2}{2},
\qquad
\operatorname{Var}_P(\|X\|^2)=\frac{r^4}{4}.
\]
Therefore
\[
\operatorname{Var}_{\lambda,P}\!\left(\widetilde a_P(Y)\right)
=
\frac{r^4}{4d^2}
+\frac{2ar^2}{d^2}
+\frac{2a^2}{d}.
\]
The covering assumption allows $r$ to be as large as the support
radius $R$. Taking $r\asymp R$, the first term is of order
\[
    \frac{R^4}{d^2}.
\]
Since the class allows $R^2\gg k$, this can be much larger than the
intrinsic-dimensional scale $k^2/d^2$. Thus the covering condition
alone does not provide a uniform bound on radial fluctuations in
terms of $k$.

Finally, the map $a\mapsto a^{-1}$ has derivative bounded above and
below on the compact interval
$[\lambda_{\max}^{-1},\lambda_{\min}^{-1}]$. Thus fluctuations in a
variance estimator translate, up to factors determined by
$\lambda_{\min}$ and $\lambda_{\max}$, into fluctuations of the same
local order for the corresponding precision estimator.

The centered norm statistic therefore gives no uniform upper bound based
only on covering complexity. The fixed-$P$ MLE uses the full signal law and
satisfies the bound proved above.

\section{Proofs for Strong Log-Concavity}
\label{app:slc-proofs}

For a fixed $P\in\mathcal P_{\mathrm{SLC}}(\alpha)$, let
$\mu_P:=\mathbb E_PX$. Replacing $(X,Y)$ by
$(X-\mu_P,Y-\mu_P)$ preserves the precision parameter and the strong
log-concavity constant. We can therefore center the signal throughout this section, taking
$\mathbb E_PX=0$.

For $P\in\mathcal P_{\mathrm{SLC}}(\alpha)$, write
\[
\Lambda=[\lambda_{\min},\lambda_{\max}],
\qquad
\Delta_\Lambda:=\lambda_{\max}-\lambda_{\min}.
\]
With $P$ centered so that $\mathbb E_P X=0$, the observation law at precision
$\lambda\in\Lambda$ is
\[
P_\lambda
=
P*N(0,\lambda^{-1}I_d).
\]
We also use
\[
\rho_{\alpha,d}(\Lambda)
:=
\min\left\{
\Delta_\Lambda^2,\,
\frac{(1+\alpha^{-1})^2}{d}
\right\}.
\]

\subsection{Gaussian lower bound}
\label{app:slc-lower}

For the lower bound in Theorem~\ref{thm:slc}, consider the centered
Gaussian signal law
\[
P_\alpha:=N(0,\alpha^{-1}I_d).
\]
Its potential is $V(x)=\alpha\|x\|^2/2$, so
$\nabla^2V(x)=\alpha I_d$ and hence
$P_\alpha\in\mathcal P_{\mathrm{SLC}}(\alpha)$. Under this fixed prior,
\[
Y
\sim
N\!\left(
0,\,
(\alpha^{-1}+\lambda^{-1})I_d
\right).
\]
It suffices to bound from below the risk
$\mathcal{R}(P_\alpha;\Lambda)$.

Let
\[
\lambda_{\mathrm{mid}}
:=
\frac{\lambda_{\min}+\lambda_{\max}}{2},
\]
and let $h>0$ be chosen below. Set
\[
\lambda_0:=\lambda_{\mathrm{mid}}-h,
\qquad
\lambda_1:=\lambda_{\mathrm{mid}}+h,
\]
and write
\[
\mu_i
:=
N(0,v_iI_d),
\qquad
v_i:=\alpha^{-1}+\lambda_i^{-1},
\qquad i\in\{0,1\}.
\]
We will choose $h$ small enough that $\lambda_0,\lambda_1\in\Lambda$.

By Lemma~\ref{lem:two-point},
\[
R(P_\alpha;\Lambda)
\ge
\frac{|\lambda_1-\lambda_0|^2}{8}
\left(
1-\|\mu_0-\mu_1\|_{\mathrm{TV}}
\right).
\]
We choose $h$ to keep the two Gaussian laws at total variation distance
bounded away from one.

For centered Gaussians with scalar covariance matrices,
\[
\mathrm{KL}(\mu_0\|\mu_1)
=
\frac d2
\left(
\frac{v_0}{v_1}
-1
-\log\frac{v_0}{v_1}
\right).
\]
Since $\lambda_0<\lambda_1$,
\[
0
\le
\frac{v_0}{v_1}-1
=
\frac{\lambda_0^{-1}-\lambda_1^{-1}}
{\alpha^{-1}+\lambda_1^{-1}}
=
\frac{2h}{\lambda_0(1+\lambda_1/\alpha)}
\le
C_\Lambda
\frac{h}{1+\alpha^{-1}}.
\]
Hence, provided $h\le c_\Lambda(1+\alpha^{-1})$ for a sufficiently small
$c_\Lambda>0$, we have $1\le v_0/v_1\le 3/2$. Using
$r-1-\log r\le C(r-1)^2$ on this interval gives
\[
\mathrm{KL}(\mu_0\|\mu_1)
\le
C_\Lambda
\frac{dh^2}{(1+\alpha^{-1})^2}.
\]

We now choose
\[
h
:=
c_\Lambda
\min\left\{
\Delta_\Lambda,\,
\frac{1+\alpha^{-1}}{\sqrt d}
\right\},
\]
where $c_\Lambda>0$ is sufficiently small. This choice ensures both
$\lambda_0,\lambda_1\in\Lambda$ and
\[
\mathrm{KL}(\mu_0\|\mu_1)\le \frac18.
\]
Pinsker's inequality then gives
\[
\|\mu_0-\mu_1\|_{\mathrm{TV}}
\le
\sqrt{\frac12\mathrm{KL}(\mu_0\|\mu_1)}
\le
\frac14.
\]
The two-point lower bound therefore yields
\[
R(P_\alpha;\Lambda)
\gtrsim
h^2
\asymp_\Lambda
\min\left\{
\Delta_\Lambda^2,\,
\frac{(1+\alpha^{-1})^2}{d}
\right\}
=
\rho_{\alpha,d}(\Lambda).
\]
Since $P_\alpha\in\mathcal P_{\mathrm{SLC}}(\alpha)$,
\[
\mathcal{R}^\star_{\mathrm{SLC}}(d,\alpha;\Lambda)
\ge
R(P_\alpha;\Lambda)
\gtrsim_\Lambda
\rho_{\alpha,d}(\Lambda),
\]
which proves the lower bound in Theorem~\ref{thm:slc}.

\subsection{Norm-estimator upper bound}
\label{app:slc-norm}

The matching upper bound follows from the $P$-centered norm estimator. By Lemma~\ref{lem:slc-moments},
\[
    \mathbb{E}_P\|X-\mu_P\|^2
    \leq
    \frac{d}{\alpha},
    \qquad
    \operatorname{Var}_P
    \bigl(\|X-\mu_P\|^2\bigr)
    \leq
    \frac{4d}{\alpha^2}.
\]

Write $a=\lambda^{-1}$ and let
\[
A_\Lambda
:=
[\lambda_{\max}^{-1},\lambda_{\min}^{-1}]
\]
denote the admissible noise-variance interval. Define the $P$-centered statistic
\[
\widetilde a_P(Y)
:=
\frac{
    \|Y-\mu_P\|^2
    -
    \mathbb E_P\|X-\mu_P\|^2
}{d}.
\]
Define its projection by
\[
\widehat a_P(Y)
:=
\Pi_{A_\Lambda}\bigl(\widetilde a_P(Y)\bigr),
\]
and define the corresponding precision estimator by
\[
\widehat\lambda_{\mathrm{norm},P}(Y)
:=
\widehat a_P(Y)^{-1}.
\]

Fix $\lambda\in\Lambda$ and write $a=\lambda^{-1}$. By
Lemma~\ref{lem:radial-variance}, applied with $m=\mu_P$,
\[
    \mathbb{E}_{\lambda,P}\widetilde a_P(Y)=a
\]
and
\[
    \operatorname{Var}_{\lambda,P}
    \bigl(\|Y-\mu_P\|^2\bigr)
    =
    \operatorname{Var}_P
    \bigl(\|X-\mu_P\|^2\bigr)
    +
    4a\,\mathbb{E}_P\|X-\mu_P\|^2
    +
    2a^2d.
\]
Using Lemma~\ref{lem:slc-moments} and
$a\leq\lambda_{\min}^{-1}$,
\begin{align*}
\mathbb E_{\lambda,P}
\left[
\bigl(\widetilde a_P(Y)-a\bigr)^2
\right]
=
\frac{
    \operatorname{Var}_{\lambda,P}
    \bigl(\|Y-\mu_P\|^2\bigr)
}{d^2} \\
&\le
\frac1d
\left(
\frac4{\alpha^2}
+
\frac4{\alpha\lambda_{\min}}
+
\frac2{\lambda_{\min}^2}
\right) \\
&\lesssim_\Lambda
\frac{(1+\alpha^{-1})^2}{d}.
\end{align*}

Since $a\in A_\Lambda$, Euclidean projection cannot increase the estimation error:
\[
|\widehat a_P(Y)-a|
\le
|\widetilde a_P(Y)-a|.
\]
Furthermore, both $a$ and $\widehat a_P(Y)$ lie in $A_\Lambda$, and hence
\[
\left|
\widehat\lambda_{\mathrm{norm},P}(Y)-\lambda
\right|
=
\left|
\frac1{\widehat a_P(Y)}-\frac1a
\right|
=
\frac{|\widehat a_P(Y)-a|}
{a\widehat a_P(Y)}
\le
\lambda_{\max}^2
|\widehat a_P(Y)-a|.
\]
Therefore
\[
\sup_{\lambda\in\Lambda}
\mathbb E_{\lambda,P}
\left[
\bigl(
\widehat\lambda_{\mathrm{norm},P}(Y)-\lambda
\bigr)^2
\right]
\lesssim_\Lambda
\frac{(1+\alpha^{-1})^2}{d}.
\]
Because both $\widehat\lambda_{\mathrm{norm},P}(Y)$ and $\lambda$ belong to
$\Lambda$, their squared distance is bounded by
$\Delta_\Lambda^2$. Combining the two bounds gives
\[
\sup_{\lambda\in\Lambda}
\mathbb E_{\lambda,P}
\left[
\bigl(
\widehat\lambda_{\mathrm{norm},P}(Y)-\lambda
\bigr)^2
\right]
\lesssim_\Lambda
\min\left\{
\Delta_\Lambda^2,\,
\frac{(1+\alpha^{-1})^2}{d}
\right\}
=
\rho_{\alpha,d}(\Lambda).
\]
Since this holds for every fixed
$P\in\mathcal P_{\mathrm{SLC}}(\alpha)$,
\[
\mathcal{R}^\star_{\mathrm{SLC}}(d,\alpha;\Lambda)
\lesssim_\Lambda
\rho_{\alpha,d}(\Lambda).
\]
Together with Appendix~\ref{app:slc-lower}, this proves the minimax-rate statement in
Theorem~\ref{thm:slc}.

\subsection{R\'enyi separation for noisy marginals}
\label{app:slc-renyi}

To analyze the MLE, we first bound the separation of the noisy marginals.
Fix a true precision
$\lambda^\star\in\Lambda$ and suppose
\[
Y\sim P_{\lambda^\star}.
\]
Define
\[
s_{\alpha,\Lambda}
:=
\alpha^{-1}+\lambda_{\min}^{-1}.
\]
Since $\Lambda$ is fixed,
$s_{\alpha,\Lambda}\asymp_\Lambda 1+\alpha^{-1}$.

For $a>0$, write
\[
\mu_a
:=
P*N(0,aI_d),
\]
and denote its density by $p_a$. Gaussian convolution preserves strong log-concavity: if $P$ is
$\alpha$-strongly log-concave, then $\mu_a$ is
$(\alpha^{-1}+a)^{-1}$-strongly log-concave; see
\cite{saumard2014logconcavity}. Hence, for every
$a\in[\lambda_{\max}^{-1},\lambda_{\min}^{-1}]$,
\[
-\nabla^2\log p_a(y)
\succeq
\frac1{\alpha^{-1}+a}I_d
\succeq
\frac1{s_{\alpha,\Lambda}}I_d.
\]
Thus every noisy marginal in the parameter interval is
$s_{\alpha,\Lambda}^{-1}$-strongly log-concave.

Set
\[
a:=\lambda^{-1},
\qquad
a^\star:=(\lambda^\star)^{-1}.
\]
Define the Hellinger midpoint distribution
\[
\nu(dy)
:=
\frac{\sqrt{p_a(y)p_{a^\star}(y)}}
{\int\sqrt{p_a(z)p_{a^\star}(z)}\,dz}\,dy.
\]
By direct calculation,
\[
D_{1/2}(\mu_{a^\star}\|\mu_a)
=
\mathrm{KL}(\nu\|\mu_{a^\star})
+
\mathrm{KL}(\nu\|\mu_a).
\]
Since both $\mu_{a^\star}$ and $\mu_a$ are
$s_{\alpha,\Lambda}^{-1}$-strongly log-concave, Talagrand's transportation
inequality gives
\begin{align*}
D_{1/2}(\mu_{a^\star}\|\mu_a)
&\ge
\frac1{2s_{\alpha,\Lambda}}
\left\{
W_2^2(\nu,\mu_{a^\star})
+
W_2^2(\nu,\mu_a)
\right\} \\
&\ge
\frac1{4s_{\alpha,\Lambda}}
W_2^2(\mu_{a^\star},\mu_a),
\end{align*}
where the second inequality follows from the triangle inequality for $W_2$
and $u^2+v^2\ge (u+v)^2/2$.

Let
\[
\Sigma:=\operatorname{Cov}_P(X),
\]
and denote its eigenvalues by $\tau_1,\ldots,\tau_d$. The Poincar\'e
inequality gives
\[
0\le \tau_i\le \alpha^{-1},
\qquad i=1,\ldots,d.
\]
Moreover,
\[
\operatorname{Cov}(\mu_a)=\Sigma+aI_d,
\qquad
\operatorname{Cov}(\mu_{a^\star})=\Sigma+a^\star I_d.
\]
The covariance lower bound for the \(2\)-Wasserstein distance \citep{gelbrich1990formula} therefore yields
\begin{align*}
W_2^2(\mu_{a^\star},\mu_a)
&\ge
\sum_{i=1}^d
\left(
\sqrt{\tau_i+a^\star}
-
\sqrt{\tau_i+a}
\right)^2 \\
&=
(a-a^\star)^2
\sum_{i=1}^d
\frac1{
\left(
\sqrt{\tau_i+a^\star}
+
\sqrt{\tau_i+a}
\right)^2
}.
\end{align*}
Since
$\tau_i+a,\tau_i+a^\star\le s_{\alpha,\Lambda}$,
\[
W_2^2(\mu_{a^\star},\mu_a)
\ge
\frac{d(a-a^\star)^2}{4s_{\alpha,\Lambda}}.
\]
Combining the preceding bounds gives
\[
D_{1/2}(\mu_{a^\star}\|\mu_a)
\ge
\frac{d(a-a^\star)^2}
{16s_{\alpha,\Lambda}^2}.
\]
Finally,
\[
|a-a^\star|
=
\frac{|\lambda-\lambda^\star|}
{\lambda\lambda^\star}
\ge
\frac{|\lambda-\lambda^\star|}
{\lambda_{\max}^2},
\]
and hence
\[
D_{1/2}
\bigl(
P_{\lambda^\star}\|P_\lambda
\bigr)
\ge
c_\Lambda
\frac{d(\lambda-\lambda^\star)^2}
{s_{\alpha,\Lambda}^2}.
\tag{C.1}
\label{eq:slc-renyi-separation}
\]
This bound quantifies the R\'enyi separation needed to localize the MLE.

\subsection{Uniform likelihood-ratio control}
\label{app:slc-lr}

We use the separation bound \eqref{eq:slc-renyi-separation} to control
likelihood ratios uniformly over candidate precisions.

Recall that
\[
\ell(\lambda\mid y)
:=
-\log p_\lambda(y).
\]
For a fixed candidate $\lambda\in\Lambda$ and any $u\ge0$,
\begin{align*}
&\mathbb P_{\lambda^\star}
\left(
\ell(\lambda\mid Y)
-
\ell(\lambda^\star\mid Y)
\le u
\right) \\
&\qquad=
\mathbb P_{\lambda^\star}
\left(
\frac{p_\lambda(Y)}
{p_{\lambda^\star}(Y)}
\ge e^{-u}
\right).
\end{align*}
Applying Markov's inequality to the square root of the likelihood ratio gives
\begin{align*}
\mathbb P_{\lambda^\star}
\left(
\ell(\lambda\mid Y)
-
\ell(\lambda^\star\mid Y)
\le u
\right)
&\le
e^{u/2}
\mathbb E_{\lambda^\star}
\left[
\left(
\frac{p_\lambda(Y)}
{p_{\lambda^\star}(Y)}
\right)^{1/2}
\right] \\
&=
\exp\left\{
\frac u2
-
\frac12
D_{1/2}
\bigl(
P_{\lambda^\star}\|P_\lambda
\bigr)
\right\}.
\end{align*}
Using \eqref{eq:slc-renyi-separation} and adjusting $c_\Lambda$,
\[
\mathbb P_{\lambda^\star}
\left(
\ell(\lambda\mid Y)
-
\ell(\lambda^\star\mid Y)
\le u
\right)
\le
\exp\left\{
\frac u2
-
c_\Lambda
\frac{d(\lambda-\lambda^\star)^2}
{s_{\alpha,\Lambda}^2}
\right\}.
\tag{C.2}
\label{eq:fixed-lr}
\]

It remains to make this bound uniform over $\lambda\in\Lambda$. By the score
identity from Section~\ref{sec:background},
\[
\partial_\lambda\ell(\lambda\mid y)
=
-\frac{d}{2\lambda}
+
\frac12D_\lambda(y),
\]
where
\[
D_\lambda(y)
:=
\mathbb E_{q_\lambda(\cdot\mid y)}
\|X-y\|^2.
\]
We first show that
\[
D_\lambda(y)
\le
\mathbb E_P\|X-y\|^2.
\tag{C.3}
\label{eq:posterior-energy-upper}
\]
Let
\[
U:=\|X-y\|^2,
\qquad
w(U):=e^{-\lambda U/2}.
\]
Then
\[
D_\lambda(y)
=
\frac{\mathbb E_P[Uw(U)]}
{\mathbb E_P[w(U)]}.
\]
If $U'$ is an independent copy of $U$, then
\[
2\operatorname{Cov}(U,w(U))
=
\mathbb E
\left[
(U-U')
\bigl(
w(U)-w(U')
\bigr)
\right]
\le0,
\]
because $w$ is decreasing. Hence
\[
\mathbb E_P[Uw(U)]
\le
\mathbb E_P[U]\,
\mathbb E_P[w(U)],
\]
which proves \eqref{eq:posterior-energy-upper}.

Since $\mathbb E_PX=0$ and
$\mathbb E_P\|X\|^2\le d/\alpha$,
\[
D_\lambda(y)
\le
\mathbb E_P\|X-y\|^2
=
\mathbb E_P\|X\|^2+\|y\|^2
\le
\frac d\alpha+\|y\|^2.
\]
Consequently,
\[
\sup_{\lambda\in\Lambda}
|\partial_\lambda\ell(\lambda\mid y)|
\le
C_\Lambda
\left\{
s_{\alpha,\Lambda}d+\|y\|^2
\right\}.
\tag{C.4}
\label{eq:likelihood-derivative}
\]

Under the true precision $\lambda^\star$,
the noisy marginal $P_{\lambda^\star}$ is
$s_{\alpha,\Lambda}^{-1}$-strongly log-concave. Moreover,
\[
\mathbb E_{\lambda^\star}\|Y\|^2
=
\mathbb E_P\|X\|^2+\frac d{\lambda^\star}
\le
ds_{\alpha,\Lambda}.
\]
Standard concentration for strongly log-concave measures, applied to the
$1$-Lipschitz map $y\mapsto\|y\|$, therefore gives, for every $t\ge1$,
\[
\mathbb P_{\lambda^\star}
\left(
\|Y\|^2
>
C_\Lambda
s_{\alpha,\Lambda}(d+t)
\right)
\le
e^{-t}.
\tag{C.5}
\label{eq:y-norm-concentration}
\]
On the complementary event, \eqref{eq:likelihood-derivative} implies
\[
\sup_{\lambda\in\Lambda}
|\partial_\lambda\ell(\lambda\mid Y)|
\le
L_t,
\qquad
L_t
:=
C_\Lambda
s_{\alpha,\Lambda}(d+t).
\]

Set
\[
\varepsilon_t:=L_t^{-1}
\]
and choose an $\varepsilon_t$-net
$\{\lambda_1,\ldots,\lambda_{N_t}\}$ of $\Lambda$. Since $\Lambda$ is a
fixed compact interval,
\[
N_t
\le
1+\frac{\Delta_\Lambda}{\varepsilon_t}
\le
C_\Lambda
s_{\alpha,\Lambda}(d+t).
\tag{C.6}
\label{eq:net-size}
\]
On the event in \eqref{eq:y-norm-concentration}, for every
$\lambda\in\Lambda$ and a nearest net point $\lambda_j$,
\[
|\ell(\lambda\mid Y)-\ell(\lambda_j\mid Y)|
\le
L_t\varepsilon_t
=
1.
\]

Suppose there exists $\lambda\in\Lambda$ such that
\[
|\lambda-\lambda^\star|
\ge
r+\varepsilon_t
\]
and
\[
\ell(\lambda\mid Y)
\le
\ell(\lambda^\star\mid Y).
\]
Its nearest net point then satisfies
\[
|\lambda_j-\lambda^\star|
\ge r
\]
and
\[
\ell(\lambda_j\mid Y)
-
\ell(\lambda^\star\mid Y)
\le1.
\]
Applying \eqref{eq:fixed-lr} with $u=1$ and taking a union bound over the
net points yields
\begin{align}
&\mathbb P_{\lambda^\star}
\left(
\inf_{\substack{\lambda\in\Lambda:\\
|\lambda-\lambda^\star|\ge r+\varepsilon_t}}
\left\{
\ell(\lambda\mid Y)
-
\ell(\lambda^\star\mid Y)
\right\}
\le0
\right)
\nonumber\\
&\qquad\le
e^{-t}
+
N_t
\exp\left\{
\frac12
-
c_\Lambda
\frac{dr^2}
{s_{\alpha,\Lambda}^2}
\right\}.
\tag{C.7}
\label{eq:uniform-lr}
\end{align}
This controls the likelihood ratio uniformly over the interval.

\subsection{MLE localization}
\label{app:slc-mle}

The uniform bound \eqref{eq:uniform-lr} gives the MLE risk bound in
Theorem~\ref{thm:slc}. Recall that
\[
\widehat\lambda_{\mathrm{MLE},P}(Y)
\in
\arg\min_{\lambda\in\Lambda}
\ell(\lambda\mid Y).
\]
Because both
$\widehat\lambda_{\mathrm{MLE},P}$ and $\lambda^\star$ belong to
$\Lambda$,
\[
\left|
\widehat\lambda_{\mathrm{MLE},P}
-
\lambda^\star
\right|^2
\le
\Delta_\Lambda^2.
\]
It remains to consider the regime in which the risk bound is nontrivial:
\[
\frac{
s_{\alpha,\Lambda}^2\log(2d)
}{d}
\le
\Delta_\Lambda^2.
\tag{C.8}
\label{eq:nontrivial-regime}
\]

Take
\[
t:=3\log(2d).
\]
By \eqref{eq:net-size},
\[
N_t
\le
C_\Lambda
s_{\alpha,\Lambda}(d+t).
\]
Under \eqref{eq:nontrivial-regime},
\[
\log N_t
\lesssim_\Lambda
\log(2d).
\]
Choose
\[
r_t
:=
C_\Lambda
s_{\alpha,\Lambda}
\sqrt{
\frac{\log(2d)}{d}
},
\]
with $C_\Lambda$ sufficiently large. Then
\eqref{eq:uniform-lr} gives
\[
\mathbb P_{\lambda^\star}
\left(
\inf_{\substack{\lambda\in\Lambda:\\
|\lambda-\lambda^\star|
\ge r_t+\varepsilon_t}}
\left\{
\ell(\lambda\mid Y)
-
\ell(\lambda^\star\mid Y)
\right\}
\le0
\right)
\le
2(2d)^{-3}.
\tag{C.9}
\label{eq:mle-good-event}
\]

By definition of the constrained MLE,
\[
\ell(
\widehat\lambda_{\mathrm{MLE},P}
\mid Y
)
\le
\ell(\lambda^\star\mid Y).
\]
Therefore, on the complement of the event in
\eqref{eq:mle-good-event},
\[
\left|
\widehat\lambda_{\mathrm{MLE},P}
-
\lambda^\star
\right|
\le
r_t+\varepsilon_t.
\]
Since
\[
\varepsilon_t
=
\frac1{
C_\Lambda
s_{\alpha,\Lambda}(d+t)
}
\]
and $s_{\alpha,\Lambda}\gtrsim_\Lambda1$, we have
\[
\varepsilon_t
\lesssim_\Lambda
s_{\alpha,\Lambda}
\sqrt{
\frac{\log(2d)}{d}
}.
\]
Hence, with probability at least
$1-2(2d)^{-3}$,
\[
\left|
\widehat\lambda_{\mathrm{MLE},P}
-
\lambda^\star
\right|
\lesssim_\Lambda
s_{\alpha,\Lambda}
\sqrt{
\frac{\log(2d)}{d}
}.
\tag{C.10}
\label{eq:mle-localization}
\]

Using the deterministic bound
$|\widehat\lambda_{\mathrm{MLE},P}-\lambda^\star|
\le\Delta_\Lambda$
on the complementary event, we obtain
\begin{align*}
\mathbb E_{\lambda^\star,P}
\left[
\left(
\widehat\lambda_{\mathrm{MLE},P}
-
\lambda^\star
\right)^2
\right]
&\lesssim_\Lambda
\frac{
s_{\alpha,\Lambda}^2\log(2d)
}{d}
+
\Delta_\Lambda^2(2d)^{-3} \\
&\lesssim_\Lambda
\frac{
s_{\alpha,\Lambda}^2\log(2d)
}{d}.
\end{align*}
The estimate is uniform over
$\lambda^\star\in\Lambda$. Since
$s_{\alpha,\Lambda}\asymp_\Lambda1+\alpha^{-1}$,
\[
\sup_{\lambda^\star\in\Lambda}
\mathbb E_{\lambda^\star,P}
\left[
\left(
\widehat\lambda_{\mathrm{MLE},P}
-
\lambda^\star
\right)^2
\right]
\lesssim_\Lambda
\frac{
(1+\alpha^{-1})^2\log(2d)
}{d}
\]
in the nontrivial regime. Combining this with the deterministic diameter
bound gives, for every $d\ge1$ and $\alpha>0$,
\[
\sup_{\lambda^\star\in\Lambda}
\mathbb E_{\lambda^\star,P}
\left[
\left(
\widehat\lambda_{\mathrm{MLE},P}
-
\lambda^\star
\right)^2
\right]
\lesssim_\Lambda
\min\left\{
\Delta_\Lambda^2,\,
\frac{
(1+\alpha^{-1})^2\log(2d)
}{d}
\right\}.
\]
This proves the MLE statement in Theorem~\ref{thm:slc}.

\end{document}